\numberwithin{equation}{section}
\theoremstyle{plain}
\newtheorem{thm}{Theorem}[section]
\newtheorem{cor}{Corollary}[section]
\newtheorem{lem}{Lemma}[section]
\newtheorem{rem}{Remark}[section]
\newcommand{\dC}{\mathbb{C}}
\newcommand{\dE}{\mathbb{E}}
\newcommand{\dI}{\mathbb{I}}
\newcommand{\dN}{\mathbb{N}}
\newcommand{\dP}{\mathbb{P}}
\newcommand{\dR}{\mathbb{R}}
\newcommand{\dZ}{\mathbb{Z}}
\newcommand{\cD}{\mathcal{D}}
\newcommand{\cF}{\mathcal{F}}
\newcommand{\cH}{\mathcal{H}}
\newcommand{\cL}{\mathcal{L}}
\newcommand{\cM}{\mathcal{M}}
\newcommand{\cN}{\mathcal{N}}
\newcommand{\de}{\mathrm{e}}
\newcommand{\cov}{\dC\textnormal{ov}}
\newcommand{\veps}{\varepsilon}
\newcommand{\hsp}{\hspace{0.5cm}}
\newcommand{\hspp}{\hspace{0.3cm}}
\newcommand{\hspand}{\hsp\text{and}\hsp}
\newcommand{\wh}{\widehat}
\newcommand{\wt}{\widetilde}
\newcommand{\cvgas}{~ \overset{\textnormal{a.s.}}{\longrightarrow} ~}
\newcommand{\cvgp}{~ \overset{\dP}{\longrightarrow} ~}
\newcommand{\cvgl}{~ \overset{\cD}{\longrightarrow} ~}
\email{frederic.proia@univ-angers.fr}
\email{marius.soltane.etu@univ-lemans.fr}
\keywords{RCAR process, MA process, Random coefficients, Least squares estimation, Stationarity, Ergodicity, Asymptotic normality, Autocorrelation.}
\begin{document}

\title[Correlation in the random coefficients]
{A test of correlation in the random coefficients of an autoregressive process
\vspace{2ex}}
\author[F. Pro\"ia]{Fr\'ed\'eric Pro\"ia}
\address{Laboratoire Angevin de REcherche en MAth\'ematiques (LAREMA), CNRS, Universit\'e d'Angers, Universit\'e Bretagne Loire. 2 Boulevard Lavoisier, 49045 Angers cedex 01, France.}
\author[M. Soltane]{Marius Soltane}
\address{Laboratoire Manceau de Math\'ematiques, Le Mans Universit\'e, Avenue O. Messiaen, 72085 Le Mans cedex 9, France.}

\thanks{}

\begin{abstract}
A random coefficient autoregressive process is deeply investigated in which the coefficients are correlated. First we look at the existence of a strictly stationary causal solution, we give the second-order stationarity conditions and the autocorrelation function of the process. Then we study some asymptotic properties of the empirical mean and the usual estimators of the process, such as convergence, asymptotic normality and rates of convergence, supplied with the appropriate assumptions on the driving perturbations. Our objective is to get an overview of the influence of correlated coefficients in the estimation step, through a simple model. In particular, the lack of consistency is shown for the estimation of the autoregressive parameter when the independence hypothesis is violated in the random coefficients. Finally, a consistent estimation is given together with a testing procedure for the existence of correlation in the coefficients. While convergence properties rely on the ergodicity, we use a martingale approach to reach most of the results.
\end{abstract}

\maketitle

\noindent \textbf{Notations and conventions.} In the whole paper, $I_{p}$ is the identity matrix of order $p$, $[v]_{i}$ refers to the $i$--th element of any vector $v$ and $M_{i}$ to the $i$--th column of any matrix $M$. In addition, $\rho(M)$ is the spectral radius of any square matrix $M$, $M \circ N$ is the Hadamard product between matrices $M$ and $N$, and $\ln^{+}\! x = \max(\ln x, 0)$. We make the conventions $\sum_{\varnothing} = 0$ and $\prod_{\varnothing} = 1$. Symbols $o(\cdot)$ and $O(\cdot)$ with regard to random sequences will be repeatedly used in the same way as applied to real-valued functions: as $n \rightarrow +\infty$, for some positive deterministic rate $(v_{n})$, $X_{n} = o(v_{n})$ a.s. means that $X_{n}/v_{n}$ converges almost surely to 0 whereas $X_{n} = O(v_{n})$ a.s. means, in the terminology of \cite{Duflo97}, that for almost all $\omega$, $X_{n}(\omega) = O(v_{n})$, that is $\vert X_{n}(\omega) \vert \leq C(\omega)\, v_{n}$ for some finite $C(\omega) \geq 0$ and $n \geq N(\omega)$.

%%%%%%%%%%%%%%%%%%%%%%%%%%%%%%%%%%%%%%%%%%%%%%%%%%%%%%%%%%%%%%%%%%%%%%%%%%%%%%%%
\section{Introduction and Motivations}
\label{SecIntro}

In the econometric field, nonlinear time series are now very popular. Our interest lies in some kind of generalization of the standard first-order autoregressive process through random coefficients. The well-known random coefficient autoregressive process RCAR(1) is defined for $t \in \dZ$ by
$$
X_{t} = (\theta + \eta_{t}) X_{t-1} + \veps_{t}
$$
where $(\veps_{t})$ and $(\eta_{t})$ are uncorrelated white noises. Since the seminal works of And\v{e}l \cite{Andel76} and Nicholls and Quinn \cite{NichollsQuinn81a}, stationarity conditions for such processes have been widely studied under various assumptions on the moments of $(\veps_{t})$ and $(\eta_{t})$. Namely, the process was proven to be second-order stationary if $\theta^2 + \tau_2 < 1$ where $\tau_2$ stands for the variance of $(\eta_{t})$. Quite recently, Aue \textit{et al.} \cite{AueHorvathSteinebach06} have given necessary and sufficient conditions for the existence and uniqueness of a strictly stationary solution of the RCAR(1) process, derived from the more general paper of Brandt \cite{Brandt86}, and some of our technical assumptions are inspired by their works. However, the flexibility induced by RCAR processes is balanced by the absence of correlation between two consecutive values of the random coefficient. In a time series context, this seems somehow counterintuitive and difficult to argue. Our main objective is precisely to show that the violation of the independence hypothesis in the coefficients, though quite likely for a stochastic phenomenon, leads to a falsification of the whole estimation procedures, and therefore of statistical interpretations. That is the reason why we suggest in this paper an example of random coefficients having a short (finite) memory, in the form of a moving-average dynamic, for which the estimation of the mean value shall be conducted as if they were uncorrelated. For all $t \in \dZ$, we consider the first-order autoregressive process given by
\begin{equation}
\label{XAR}
X_{t} = \theta_{t}\, X_{t-1} + \veps_{t}
\end{equation}
where $\theta_{t}$ is a random coefficient generated by the moving-average structure
\begin{equation}
\label{MACoef}
\theta_{t} = \theta + \alpha\, \eta_{t-1} + \eta_{t}.
\end{equation}
This choice of dependence pattern in the coefficients is motivated by Prop. 3.2.1 of \cite{BrockwellDavis96} which states that any stationary process having finite memory is solution of a moving-average structure. In other words, there exists a white noise such that the random coefficients admit the decomposition given above, and this justifies our interest in \eqref{MACoef}. We can find the foundations of a similar model in Koubkov\`a \cite{Koubkova82} or in a far more general way in Brandt \cite{Brandt86}, but as we will see throughout the paper our objectives clearly diverge. While their works concentrate on the properties of the stationary solution, a large part of this paper focuses on inference. The set of hypotheses that we retain is presented at the end of this introduction, and Section \ref{SecProp} is devoted to the existence, the uniqueness and the stationarity conditions of $(X_{t})$. This preliminary study enables us to derive the autocorrelation function of the process. In Section \ref{SecOLS}, the empirical mean of the process and the usual estimators of $\theta$ and $\sigma_2$ are investigated, where $\sigma_2$ stands for the variance of $(\veps_{t})$. In particular, we establish some almost sure convergences, asymptotic normalities and rates of convergence, and we also need some results on the fourth-order moments of the process that we deeply examinate. The surprising corollary of these calculations is that the estimation is not consistent for $\theta$ as soon as $\alpha \neq 0$, whereas it is well-known that consistency is preserved in the RCAR(1) process. That leads us in Section \ref{SecTest} to build a consistent estimation together with its asymptotic normality, and to derive a statistical procedure for the existence of correlation in the coefficients. In Section \ref{SecProofs}, we finally prove our results. The estimation of RCAR processes has also been widely addressed in the stationary case, for example by Nicholls and Quinn \cite{NichollsQuinn81b} and later by Schick \cite{Schick96}, using either least squares or quasi-maximum likelihood. The crucial point in these works is the strong consistency of the estimation, whereas it appears in our results that the introduction of correlation in the coefficients is only possible at the cost of consistency. In a general way, our objective is to get an overview of the influence of correlated coefficients in the estimation step through a simple model, to open up new perspectives for more complex structures of dependence. Throughout the paper, we will recall the well-known results related to the first-order stationary RCAR process that are supposed to match with ours for $\alpha=0$. The reader may find a whole survey in Nicholls and Quinn \cite{NichollsQuinn82} and without completeness, we also mention the investigations of \cite{Robinson78}, \cite{Jurgens85}, \cite{HwangBasawa05}, \cite{HwangBasawaKim06}, \cite{BerkesHorvathLing09} about inference on RCAR processes, or the unified procedure of Aue and Horv\'{a}th \cite{AueHorvath11} and references inside. For all $a > 0$, we note the moments
$$
\sigma_{a} = \dE[\veps_0^{\, a}] \hspand \tau_{a} = \dE[\eta_0^{\, a}].
$$
To simplify the calculations, we consider the family of vectors given by
\begin{equation}
\label{FamVecMom2}
U_0 = \begin{pmatrix}
1 \\ 0 \\ \tau_2
\end{pmatrix}, \hsp
U_1 = \begin{pmatrix}
0 \\ \tau_2 \\ 0
\end{pmatrix}, \hsp
U_2 = \begin{pmatrix}
\tau_2 \\ 0 \\ \tau_4
\end{pmatrix}.
\end{equation}
A particular $3 \times 3$ matrix is used all along the study to characterize the second-order properties of the process, it is based on $\{ U_0, U_1, U_2 \}$ in such a way that
\begin{equation}
\label{M}
M =
\begin{pmatrix}
\theta^2 + \tau_2 & 2\, \alpha\, \theta & \alpha^2 \\
2\, \theta\, \tau_2 &  2\, \alpha\, \tau_2 & 0 \\
\theta^2\, \tau_2 + \tau_4 & 2\, \alpha\, \theta\, \tau_2 & \alpha^2\, \tau_2
\end{pmatrix} \hsp \text{with} \hsp \left\{
\begin{array}{l}
M_1 = \theta^2\, U_0 + 2\, \theta\, U_1 + U_2 \\
M_2 = 2\, \alpha\, ( \theta\, U_0 + U_1) \\
M_3 = \alpha^2\, U_0.
\end{array}
\right.
\end{equation}
Similarly, the fourth-order properties of the process rest upon the family of vectors $\{ V_0, \hdots, V_4 \}$ where
\begin{equation}
\label{FamVecMom4}
V_0 = \begin{pmatrix}
1 \\ 0 \\ \tau_2 \\ 0 \\ \tau_4
\end{pmatrix}, \hsp
V_1 = \begin{pmatrix}
0 \\ \tau_2 \\ 0 \\ \tau_4 \\ 0
\end{pmatrix}, \hsp
V_2 = \begin{pmatrix}
\tau_2 \\ 0 \\ \tau_4 \\ 0 \\ \tau_6
\end{pmatrix}, \hsp
V_3 = \begin{pmatrix}
0 \\ \tau_4 \\ 0 \\ \tau_6 \\ 0
\end{pmatrix}, \hsp
V_4 = \begin{pmatrix}
\tau_4 \\ 0 \\ \tau_6 \\ 0 \\ \tau_8
\end{pmatrix}.
\end{equation}
There are used to build the $5 \times 5$ matrix $H$ whose columns are defined as
\begin{equation}
\label{H}
\left\{
\begin{array}{l}
H_1 = \theta^4\, V_0 + 4\, \theta^3\, V_1 + 6\, \theta^2\, V_2 + 4\, \theta\, V_3 + V_4 \\
H_2 = 4\, \alpha\, ( \theta^3\, V_0 + 3\, \theta^2\, V_1 + 3\, \theta\, V_2 + V_3 ) \\
H_3 = 6\, \alpha^2\, ( \theta^2\, V_0 + 2\, \theta\, V_1 + V_2 ) \\
H_4 = 4\, \alpha^3\, ( \theta\, V_0 + V_1 ) \\
H_5 = \alpha^4\, V_0.
\end{array}
\right.
\end{equation}
Explicitly,
$$
H = \begin{pmatrix}
\theta^4 + 6\, \theta^2\, \tau_2 + \tau_4 & 4\, \alpha\,( \theta^3 + 3\, \theta\, \tau_2) & 6\, \alpha^2\, ( \theta^2 + \tau_2) & 4\, \alpha^3\, \theta & \alpha^4 \\
4\, \theta^3\, \tau_2 + 4\, \theta\, \tau_4 & 4\, \alpha\, (3\, \theta^2\, \tau_2 + \tau_4) & 12\, \alpha^2\, \theta\, \tau_2 & 4\, \alpha^3\, \tau_2 & 0 \\
\theta^4\, \tau_2 + 6\, \theta^2\, \tau_4 + \tau_6 & 4\, \alpha\, (\theta^3\, \tau_2 + 3\, \theta\, \tau_4) & 6\, \alpha^2\, (\theta^2\, \tau_2 + \tau_4) & 4\, \alpha^3\, \theta\, \tau_2 & \alpha^4\, \tau_2 \\
4\, \theta^3\, \tau_4 + 4\, \theta\, \tau_6 & 4\, \alpha\, (3\, \theta^2\, \tau_4 + \tau_6) & 12\, \alpha^2\, \theta\, \tau_4 & 4\, \alpha^3\, \tau_4 & 0 \\
\theta^4\, \tau_4 + 6\, \theta^2\, \tau_6 + \tau_8 & 4\, \alpha\, (\theta^3\, \tau_4 + 3\, \theta\, \tau_6) & 6\, \alpha^2\, (\theta^2\, \tau_4 + \tau_6) & 4\, \alpha^3\, \theta\, \tau_4 & \alpha^4\, \tau_4
\end{pmatrix}.
$$
Various hypotheses on the parameters will be required (not always simultaneously) throughout the study, closely related to the distribution of the perturbations.
\begin{enumerate}[(H$_1$)]
\item The processes $(\veps_{t})$ and $(\eta_{t})$ are mutually independent strong white noises such that $\dE[\ln^{+}\!\vert \veps_0 \vert] < \infty$ and $\dE[\ln \vert \theta + \alpha\, \eta_0 + \eta_1 \vert] < 0$.
\item $\sigma_{2 k +1} = \tau_{2k+1} = 0$ for any $k \in \dN$ such that the moments exist.
\item $\sigma_2 > 0$, $\tau_2 > 0$, $\sigma_2 < \infty$, $\tau_4 < \infty$ and $\rho(M) < 1$.
\item $\sigma_4 < \infty$, $\tau_8 < \infty$ and $\rho(H) < 1$.
\item There exists continuous mappings $g$ and $h$ such that $\sigma_4 = g(\sigma_2)$ and $\tau_4 = h(\tau_2)$.
\end{enumerate}

\begin{rem}
Clearly, (H$_2$) can be replaced by the far less restrictive natural condition $\sigma_1 = \tau_1 = 0$. Considering that all existing odd moments of $(\veps_{t})$ and $(\eta_{t})$ are zero is only a matter of simplification of the calculations, that are already quite tricky to conduct. An even more general (and possible) study must include the contributions of $\sigma_3$, $\tau_3$, $\tau_5$ and $\tau_7$ in the whole calculations.
\end{rem}

\begin{rem}
(H$_5$) is satisfied in the centered Gaussian case with $g(t) = h(t) = 3\, t^2$. It is also satisfied for most of the distributions used to drive the noise of regression models (centered uniform, Student, Laplace, etc.). Nevertheless, it is a strong assumption only used at the end of the study.
\end{rem}

Short explanations of the remarks appearing in Sections \ref{SecProp} and \ref{SecOLS} are given at the beginning of Section \ref{SecProofs}.
%%%%%%%%%%%%%%%%%%%%%%%%%%%%%%%%%%%%%%%%%%%%%%%%%%%%%%%%%%%%%%%%%%%%%%%%%%%%%%%%

%%%%%%%%%%%%%%%%%%%%%%%%%%%%%%%%%%%%%%%%%%%%%%%%%%%%%%%%%%%%%%%%%%%%%%%%%%%%%%%%
\section{Stationarity and Autocorrelation}
\label{SecProp}

It is well-known and easy to establish that the sequence of coefficients $(\theta_{t})$ given by \eqref{MACoef} is a strictly stationary and ergodic process with mean $\theta$ and autocovariance function given by
$$
\gamma_{\theta}(0) = \tau_2\, (1 + \alpha^2), \hsp \gamma_{\theta}(1) = \alpha\, \tau_2 \hspand \gamma_{\theta}(h) = 0 \hsp (\vert h \vert > 1).
$$
Clearly, any solution of \eqref{XAR} satisfies a recurrence equation, and the first result to investigate is related to the existence of a causal, strictly stationary and ergodic solution.
\begin{thm}
\label{ThmCausal}
Assume that (H$_1$) holds. Then almost surely, for all $t \in \dZ$,
\begin{equation}
\label{XCausal}
X_{t} = \veps_{t} + \sum_{k=1}^{\infty} \veps_{t-k}\, \prod_{\ell=0}^{k-1} (\theta + \alpha\, \eta_{t-\ell-1} + \eta_{t-\ell}).
\end{equation}
In addition, $(X_{t})$ is strictly stationary and ergodic.
\end{thm}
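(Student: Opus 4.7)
The plan is to view \eqref{XAR}--\eqref{MACoef} as a stochastic linear recurrence $X_t = A_t X_{t-1} + \veps_t$ with $A_t = \theta + \alpha\, \eta_{t-1} + \eta_t$, and then apply the standard arguments of Brandt~\cite{Brandt86}. First I observe that $(A_t, \veps_t)_{t \in \dZ}$ is strictly stationary and ergodic, being a measurable image of finitely many coordinates of the i.i.d.\ source $(\veps_t, \eta_t)_{t \in \dZ}$. Hypothesis (H$_1$) then rewrites as the two classical conditions $\dE[\ln^{+}\!\vert \veps_0 \vert] < \infty$ and $\dE[\ln \vert A_0 \vert] < 0$, so the framework of linear stochastic recurrences with stationary ergodic coefficients applies.

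The technical heart of the argument is the a.s.\ convergence of the series \eqref{XCausal}. Applying Birkhoff's ergodic theorem to the stationary ergodic sequence $(\ln \vert A_t \vert)$, I obtain
\[
\frac{1}{k} \sum_{\ell=0}^{k-1} \ln \vert A_{t-\ell} \vert \cvgas \dE[\ln \vert A_0 \vert] =: -\lambda
\]
with $\lambda > 0$. Fix $0 < \delta < \lambda/2$; then eventually $\prod_{\ell=0}^{k-1} \vert A_{t-\ell} \vert \leq \de^{-(\lambda - \delta) k}$ a.s. On the other hand, $\dE[\ln^{+}\!\vert \veps_0 \vert] < \infty$ combined with Borel--Cantelli applied to the events $\{ \ln^{+}\!\vert \veps_{t-k} \vert > \delta k \}$ gives $\vert \veps_{t-k} \vert \leq \de^{\delta k}$ eventually, a.s. The general term in \eqref{XCausal} is therefore dominated by $\de^{-(\lambda - 2 \delta) k}$ for $k$ large enough, which is summable, and absolute a.s.\ convergence of the series follows.

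Once convergence is secured, substituting \eqref{XCausal} for $X_{t-1}$ and multiplying by $A_t$, a direct reindexation of the product $\prod_{\ell=0}^{k-1}$ (the factor $A_t$ being absorbed into it) yields $A_t X_{t-1} + \veps_t = X_t$, so the formula solves the recurrence \eqref{XAR}. Finally, \eqref{XCausal} exhibits $X_t$ as a fixed measurable functional $\Phi$ of the shifted sequence $(\veps_{t-j}, \eta_{t-j})_{j \geq 0}$: invariance of the joint law of $(\veps_t, \eta_t)_{t \in \dZ}$ under the time shift gives strict stationarity of $(X_t)$, and ergodicity is inherited from the i.i.d.\ driving noise via the classical principle that a measurable factor of an ergodic process is ergodic.

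The main obstacle is the a.s.\ convergence of \eqref{XCausal} under the very weak integrability built into (H$_1$): no $L^p$ estimate is available, so one must combine the multiplicative control of the backward product through the ergodic theorem with an additive Borel--Cantelli bound on the sub-exponential growth of $\vert \veps_{t-k} \vert$. Everything else reduces to routine manipulations within the Brandt framework, and the same decomposition will also be useful later to handle moment computations on $(X_t)$.
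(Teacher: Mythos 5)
Your proposal is correct and follows essentially the same route as the paper: the paper likewise invokes Brandt's framework for stationary ergodic coefficients, obtains a.s.\ convergence of the series by the ergodic theorem applied to $\ln\vert\theta+\alpha\,\eta_{t-\ell-1}+\eta_{t-\ell}\vert$ combined with the $\ln^{+}$ moment condition on $\veps_0$ (delegating the details to the argument of Aue \emph{et al.}), verifies the recurrence by the same reindexation, and deduces strict stationarity and ergodicity from the representation of $X_t$ as a fixed measurable functional of the i.i.d.\ sequence $((\veps_s,\eta_s))_{s\le t}$. You merely make explicit the Borel--Cantelli step that the paper leaves to the cited references.
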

\begin{proof}
See Section \ref{SecProofThmCausal}.
\end{proof}

By extension, the same kind of conclusions may be obtained on any process $(\veps_{t}^{\, a}\, \eta_{t}^{\, b}\, X_{t}^{\, c})$ for $a,b,c \geq 0$, assuming suitable conditions of moments. As a corollary, it will be sufficient to work on $\dE[\veps_{t}^{\, a}\, \eta_{t}^{\, b}\, X_{t}^{\, c}]$ in order to identify the asymptotic behavior (for $n \rightarrow \infty$) of empirical moments like
$$
\frac{1}{n} \sum_{t=1}^{n} \veps_{t}^{\, a}\, \eta_{t}^{\, b}\, X_{t}^{\, c}.
$$
According to the causal representation of the above theorem, the process is adapted to the filtration defined as
\begin{equation}
\label{Filtr}
\cF_{t} = \sigma( (\veps_{s}, \eta_{s}),\, s \leq t).
\end{equation}
We are now interested in the existence of the second-order properties of the process, under some additional hypotheses. We derive below its autocorrelation function using the previous notations and letting
\begin{equation}
\label{N}
N = \begin{pmatrix}
\theta & \alpha & 0 \\
\tau_2 & 0 & 0 \\
\theta\, \tau_2 & \alpha\, \tau_2 & 0
\end{pmatrix}
\hsp \text{with} \hsp \left\{
\begin{array}{l}
N_1 = \theta\, U_0 + U_1 \\
N_2 = \alpha\, U_0 \\
N_3 = 0,
\end{array}
\right.
\end{equation}
and we take advantage of the calculations to guarantee the unicity of the second-order stationary solution.

\begin{thm}
\label{ThmStat}
Assume that (H$_1$)--(H$_3$) hold. Then, $(X_{t})$ is a strictly and second-order stationary process with mean zero and autocovariance function given by
\begin{equation}
\label{ACV}
\gamma_{X}(h) = \sigma_2\, \big[ N^{\vert\, h\, \vert}\, (I_3-M)^{-1}\, U_0 \big]_1
\end{equation}
for $h \in \dZ$. Its autocorrelation function is defined as
\begin{equation}
\label{ACF}
\rho_{X}(h) = \frac{\gamma_{X}(h)}{\gamma_{X}(0)}.
\end{equation}
In addition, this is the unique causal ergodic strictly and second-order stationary solution.
\end{thm}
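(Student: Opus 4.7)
The plan is to deduce everything from the causal representation \eqref{XCausal} together with two linear matrix recursions on suitably chosen three-dimensional moment vectors. Strict stationarity, ergodicity, and the explicit form of the solution are supplied by Theorem \ref{ThmCausal} (available under (H$_1$), and a fortiori under (H$_1$)--(H$_3$)); uniqueness among causal, ergodic, strictly (and in particular second-order) stationary solutions is inherited from the same explicit expansion. The zero-mean property follows term by term: each summand $\veps_{t-k}\prod_{\ell=0}^{k-1}(\theta + \alpha\,\eta_{t-\ell-1} + \eta_{t-\ell})$ factorises into the zero-mean factor $\veps_{t-k}$ (by $\sigma_1 = 0$, implied by (H$_2$)) and a factor measurable with respect to $\sigma(\eta_s,\, s\leq t)$ which is independent of $\veps_{t-k}$ thanks to (H$_1$); the $L^1$ summability required to justify the termwise integration is a by-product of the second-moment bound derived below.

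The core step is to introduce the state vector $W_t = (X_t^{\,2},\, X_t^{\,2}\,\eta_t,\, X_t^{\,2}\,\eta_t^{\,2})^{\!\top}$. Tracking a three-dimensional vector rather than $\dE[X_t^{\,2}]$ alone is forced by the fact that $\theta_t$ involves $\eta_{t-1}$, which is itself correlated with $X_{t-1}$, so that the naive Lyapunov equation for $\dE[X_t^{\,2}]$ does not close. Squaring $X_t = (\theta + \alpha\,\eta_{t-1} + \eta_t)\,X_{t-1} + \veps_t$, multiplying by $1$, $\eta_t$ and $\eta_t^{\,2}$ respectively, and taking expectation while using that $\veps_t$ and $\eta_t$ are both independent of $\cF_{t-1}$ and that all odd moments of the noises vanish by (H$_2$), a short expansion of $\dE[(\theta + \alpha\,\eta_{t-1} + \eta_t)^2\,\eta_t^{\,k}\,X_{t-1}^{\,2}]$ for $k=0,1,2$ delivers
$$
\dE[W_t] \,=\, M\,\dE[W_{t-1}] + \sigma_2\,U_0,
$$
with $M$ and $U_0$ as in \eqref{M} and \eqref{FamVecMom2}. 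Under $\rho(M)<1$ the matrix $I_3-M$ is invertible, with $(I_3-M)^{-1} = \sum_{k\geq 0} M^k$. To legitimately pass to the stationary regime I would start from the initial condition $X_{t_0}^{(t_0)} = 0$, iterate the recursion, and obtain $\dE[W_t^{(t_0)}] = \sigma_2 \sum_{k=0}^{t-t_0-1} M^k\,U_0 \longrightarrow \sigma_2(I_3-M)^{-1}U_0$ as $t_0 \to -\infty$; coupling this with the a.s. convergence $X_t^{(t_0)} \to X_t$ coming from \eqref{XCausal}, Fatou's lemma applied to the non-negative coordinates $X_t^{\,2}$ and $X_t^{\,2}\eta_t^{\,2}$, together with the Cauchy-Schwarz bound $|\dE[X_t^{\,2}\eta_t]| \leq (\dE[X_t^{\,2}])^{1/2}(\dE[X_t^{\,2}\eta_t^{\,2}])^{1/2}$ on the signed middle coordinate, identifies $\dE[W] = \sigma_2(I_3-M)^{-1}U_0$. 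In particular, $\gamma_X(0)$ is its first component.

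The same device produces $\gamma_X(h)$ for $h\geq 1$. I would consider the cross vector $\tilde W_t^{(h)} = (\dE[X_t\,X_{t-h}],\, \dE[X_t\,X_{t-h}\,\eta_t],\, \dE[X_t\,X_{t-h}\,\eta_t^{\,2}])^{\!\top}$, multiply $X_t = \theta_t\,X_{t-1} + \veps_t$ by $X_{t-h}$, $X_{t-h}\,\eta_t$ and $X_{t-h}\,\eta_t^{\,2}$, and take expectation. The $\veps_t$ contributions all vanish because $\veps_t$ is independent of $\cF_{t-1}$ while $X_{t-h} \in \cF_{t-1}$ for $h\geq 1$, leaving the homogeneous recursion $\tilde W_t^{(h)} = N\,\tilde W_{t-1}^{(h-1)}$ with $N$ as in \eqref{N}. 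Iterating $h$ times under stationarity yields $\tilde W^{(h)} = N^h\,\dE[W] = \sigma_2 N^h(I_3-M)^{-1}U_0$, whose first coordinate is exactly \eqref{ACV}; the case $h<0$ is handled by the symmetry $\gamma_X(-h) = \gamma_X(h)$, which accounts for the $|h|$ in the statement. The principal obstacle I foresee is not the matrix bookkeeping but the transfer of finiteness from the truncated sequence $(W_t^{(t_0)})$ to the stationary $W_t$: because the coordinate $X_t^{\,2}\,\eta_t$ is not sign-definite, Fatou's lemma cannot be applied to it directly and the auxiliary Cauchy-Schwarz bound above becomes indispensable. Once that technical point is settled, everything else reduces to the spectral condition $\rho(M) < 1$ and elementary linear algebra.
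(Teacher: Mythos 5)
Your overall architecture coincides with the paper's: the same three-dimensional moment vectors, the same matrices $M$ and $N$, the formula $\sigma_2(I_3-M)^{-1}U_0$ for $\dE[W]$ and $N^{h}\dE[W]$ for the lagged moments. The genuine difference is in how you reach $\dE[W]$: the paper expands the causal series and sums $\sigma_2\sum_k M^k U_0$ term by term (Lemma \ref{LemSeq2} and Corollary \ref{CorMom2}), whereas you derive the one-step affine recursion $\dE[W_t]=M\,\dE[W_{t-1}]+\sigma_2 U_0$ from the defining recurrence and pass to the limit along truncations. That variant is legitimate, but one step is overstated: Fatou's lemma applied to $X_t^{(t_0)2}$ and $X_t^{(t_0)2}\eta_t^{2}$ only yields $\dE[W]\leq\sigma_2(I_3-M)^{-1}U_0$ componentwise (and Cauchy--Schwarz only finiteness of the signed coordinate); it does not \emph{identify} $\dE[W]$. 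The repair is cheap --- either observe that the terms of the causal series are orthogonal in $L^2$, so $X_t^{(t_0)}\to X_t$ in $L^2$ and the moments converge, or, once finiteness is secured by Fatou, apply your recursion to the stationary process itself and solve the fixed-point equation $(I_3-M)\,\dE[W]=\sigma_2U_0$ --- but as written the conclusion does not follow from the tools you invoke.

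The real gap is the uniqueness claim. You state that uniqueness among causal, ergodic, strictly and second-order stationary solutions is ``inherited from the same explicit expansion,'' but Theorem \ref{ThmCausal} only exhibits \emph{one} solution (the series \eqref{XCausal}) and says nothing about other solutions of the recurrence --- indeed Remark \ref{RemInitCond} explicitly discusses solutions with other initial conditions. To conclude you would have to show that any causal second-order stationary solution necessarily equals the series, e.g.\ by iterating the recurrence backwards and killing the term $Y_{t-K-1}\prod_{\ell}\theta_{t-\ell}$ using $\dE[\ln|\theta_0+\alpha\eta_{-1}+\eta_0|]<0$ and tightness; you give no such argument. The paper instead takes the difference $X_t-W_t$ of two candidate solutions, notes that its moment vector $e=(\dE[\eta_t^a(X_t-W_t)^2])_{a=0,1,2}$ satisfies the homogeneous equation $e=Me$, and concludes $e=0$ from $\rho(M)<1$. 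Some version of one of these two arguments must be supplied; without it a stated part of the theorem remains unproved.
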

\begin{proof}
See Section \ref{SecProofThmStat}.
\end{proof}

\begin{rem}
\label{RemPartCase}
Suppose that the process is stationary with second-order moments such that the parameters satisfy $2\, \alpha\, \tau_2 = 1$. Then, \eqref{ACV} leads to $\gamma_{X}(0)=0$, meaning that $(X_{t})$ is a deterministic process. This case is naturally excluded from the study, just like $\sigma_2 = 0$ leading to the same conclusion.
\end{rem}

\begin{rem}
For $\alpha=0$, the set of eigenvalues of $M$ is $\{ \theta^2 + \tau_2, 0, 0 \}$. Thus, the assumption $\rho(M) < 1$ reduces to $\theta^2 + \tau_2 < 1$, which is a well-known result for the stationarity of RCAR(1) processes.
\end{rem}
%%%%%%%%%%%%%%%%%%%%%%%%%%%%%%%%%%%%%%%%%%%%%%%%%%%%%%%%%%%%%%%%%%%%%%%%%%%%%%%%

%%%%%%%%%%%%%%%%%%%%%%%%%%%%%%%%%%%%%%%%%%%%%%%%%%%%%%%%%%%%%%%%%%%%%%%%%%%%%%%%
\section{Empirical mean and Usual estimation}
\label{SecOLS}

Assume that a time series $(X_{t})$ generated by \eqref{XAR}--\eqref{MACoef} is observable on the interval $t \in \{ 0, \hdots, n \}$, for $n \geq 1$. We additionally suppose that $X_0$ has the strictly stationary and ergodic distribution of the process.
\begin{rem}
\label{RemInitCond}
Making the assumption that $X_0$ has the strictly stationary and ergodic distribution of the process is only a matter of simplification of the calculations. To be complete, assume that $(Y_{t})$ is generated by the same recurrence with initial value $Y_0$. Then for all $t \geq 1$,
\begin{equation*}
X_{t}-Y_{t} = (X_0-Y_0)\, \prod_{\ell=1}^{t} (\theta + \alpha\, \eta_{\ell-1} + \eta_{\ell}).
\end{equation*}
For a sufficiently large $t$ and letting $\kappa = \dE[\ln \vert \theta + \alpha\, \eta_0 + \eta_1 \vert] < 0$, it can be shown (see Section \ref{SecProofRem} for details) that, almost surely,
\begin{equation*}
\vert X_{t}-Y_{t} \vert \leq \vert X_0-Y_0 \vert\, \de^{\frac{\kappa\, t}{2}}.
\end{equation*}
Then $Y_0$ could by any random variable satisfying $\vert X_0-Y_0 \vert < \infty$ a.s. and having at least as many moments as $X_0$.
\end{rem}
Denote the sample mean by
\begin{equation}
\label{EmpMean}
\bar{X}_{n} = \frac{1}{n}\, \sum_{t=1}^{n} X_{t}.
\end{equation}
Then, we have the following result, where the asymptotic variance $\kappa^2$ will be explicitly given in \eqref{VarEmpMean}.
\begin{thm}
\label{ThmEmpMean}
Assume that (H$_1$)--(H$_2$) hold. Then as $n$ tends to infinity, we have the almost sure convergence
\begin{equation}
\label{CvgMean}
\bar{X}_{n} \cvgas 0.
\end{equation}
In addition, if (H$_3$) also holds, we have the asymptotic normality
\begin{equation}
\label{TlcMean}
\sqrt{n}\, \bar{X}_{n} \cvgl \cN(0, \kappa^2).
\end{equation}
\end{thm}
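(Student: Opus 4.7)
\emph{Almost sure part.} By Theorem \ref{ThmCausal}, $(X_t)$ is strictly stationary and ergodic. Under (H$_2$), whenever they exist the odd moments of $\veps_0$ and $\eta_0$ vanish, giving $\dE[\veps_0] = \dE[\eta_0] = 0$; taking expectations termwise in the causal representation \eqref{XCausal} --- the $k$-th summand factors as $\dE[\veps_{-k}]\, \dE\big[\prod_{\ell=0}^{k-1}(\theta + \alpha\eta_{-\ell-1} + \eta_{-\ell})\big] = 0$ by the mutual independence of the two driving noises --- yields $\dE[X_0] = 0$, the integrability of $X_0$ being ensured by the Lyapunov condition $\dE[\ln|\theta + \alpha\eta_0 + \eta_1|] < 0$ of (H$_1$) together with (H$_2$). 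Birkhoff's ergodic theorem then gives $\bar{X}_n \cvgas 0$.

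\emph{Central limit theorem.} Under (H$_3$), Theorem \ref{ThmStat} places $X_0$ in $L^2$ with geometrically summable autocovariance. The strategy is to write $\sqrt{n}\,\bar{X}_n$, up to negligible terms, as a scaled stationary ergodic martingale difference sum and invoke the classical CLT. Rewriting \eqref{XAR} as $X_t - \theta X_{t-1} = \alpha \eta_{t-1} X_{t-1} + \eta_t X_{t-1} + \veps_t$, summing over $t = 1, \ldots, n$ and using the stationarity-based bound $\sum_{t=1}^n X_{t-1} = n\, \bar{X}_n + O_\dP(1)$, we obtain
\[
(1-\theta)\, n\, \bar{X}_n = \alpha \sum_{t=1}^n \eta_{t-1} X_{t-1} + \sum_{t=1}^n \big(\eta_t X_{t-1} + \veps_t\big) + O_\dP(1).
\]
The second sum is a $(\cF_{t-1})$-martingale difference sum, but the first is not, since $\eta_{t-1} X_{t-1}$ is $\cF_{t-1}$-measurable. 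Substituting $X_{t-1} = \theta_{t-1} X_{t-2} + \veps_{t-1}$ gives $\dE[\eta_{t-1} X_{t-1} \mid \cF_{t-2}] = \tau_2 X_{t-2}$, and moving $\alpha\tau_2 \sum_{t=1}^n X_{t-2} = \alpha\tau_2\, n\, \bar{X}_n + O_\dP(1)$ to the left side yields
\[
(1 - \theta - \alpha \tau_2)\, n\, \bar{X}_n = \sum_{t=1}^n D^\star_t + O_\dP(1),
\]
where, after a one-step reindexing of the contribution $\alpha(\eta_{t-1} X_{t-1} - \tau_2 X_{t-2})$, one arrives at
\[
D^\star_t = (1 + \alpha\theta + \alpha^2 \eta_{t-1})\, \eta_t X_{t-1} + \alpha\, (\eta_t^2 - \tau_2)\, X_{t-1} + (1 + \alpha \eta_t)\, \veps_t.
\]
A direct computation shows $\dE[D^\star_t \mid \cF_{t-1}] = 0$, so $(D^\star_t)$ is a stationary ergodic $(\cF_{t-1})$-mds in $L^2$ under (H$_3$). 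The martingale central limit theorem then gives $n^{-1/2} \sum_{t=1}^n D^\star_t \cvgl \cN(0, \dE[(D^\star_1)^2])$, and Slutsky's lemma delivers $\sqrt{n}\, \bar{X}_n \cvgl \cN(0, \kappa^2)$ with $\kappa^2 = \dE[(D^\star_1)^2]/(1-\theta-\alpha\tau_2)^2$.

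\emph{Main obstacle.} The chief difficulty is the construction of the genuine $(\cF_{t-1})$-martingale increment $D^\star_t$: the na\"ive candidate $\alpha\eta_{t-1} X_{t-1} + \eta_t X_{t-1} + \veps_t$ fails the martingale property, and both a conditional-mean correction $\alpha\tau_2 X_{t-2}$ (absorbed into the coefficient of $n\bar{X}_n$) and a one-step reindexing are required to isolate $D^\star_t$. Matching the resulting variance with the spectral formula $\sigma_2\,[(I_3+N)(I_3-N)^{-1}(I_3-M)^{-1}U_0]_1$ obtained by summing \eqref{ACV} (which should coincide with the expression forthcoming in \eqref{VarEmpMean}) is then routine matrix algebra with the matrices $M$ and $N$ of Section \ref{SecProp}, and implicitly rules out the degenerate case $\theta + \alpha\tau_2 = 1$.
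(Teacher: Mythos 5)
Your proof is correct and follows essentially the same route as the paper: your scalar martingale increment $D^\star_t$ is exactly the sum $\Omega_3^{\,T}\Delta\bar{\cM}_t$ of the three components $\bar{M}^{(1)}_n,\bar{M}^{(2)}_n,\bar{M}^{(3)}_n$ that the paper assembles into a vector martingale before projecting onto $\Omega_3$, and your variance $\dE[(D^\star_1)^2]/(1-\theta-\alpha\tau_2)^2$ coincides with $\Omega_3^{\,T}(\bar{K}\circ\bar{\Gamma})\,\Omega_3/(1-\theta-\alpha\tau_2)^2$ in \eqref{VarEmpMean}. The only cosmetic difference is that you work with the scalar martingale directly (citing Billingsley-type CLT for stationary ergodic martingale differences, which the paper also invokes as the underlying result), rather than with the vector form.
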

\begin{proof}
See Section \ref{SecProofThmEmpMean}.
\end{proof}

\begin{rem}
\label{RemPartCaseEmpMean}
For $\alpha=0$, our calculations lead to 
\begin{equation}
\label{VarKappa0}
\kappa^2_{0} = \frac{\sigma_2\, (1 - \theta^2)}{(1-\theta)^2 (1 - \theta^2 - \tau_2)}.
\end{equation}
If in addition $\tau_2 = 0$, we find that
\begin{equation}
\label{VarKappa00}
\kappa^2_{00} = \frac{\sigma_2}{(1-\theta)^2}
\end{equation}
which is a result that can be deduced from Thm. 7.1.2 of \cite{BrockwellDavis96}.
\end{rem}

Now, consider the estimator given by
\begin{equation}
\label{EstOLS}
\wh{\theta}_{n} = \frac{\sum_{t=1}^{n} X_{t-1} X_{t}}{\sum_{t=1}^{n} X_{t-1}^{\, 2}}.
\end{equation}
It is essential to be well aware that $\wh{\theta}_{n}$ is \textit{not} the OLS estimate of $\theta$ as soon as $\alpha \neq 0$. This choice of estimate is a consequence of our objectives : to show that an OLS estimation of $\theta$ in a standard RCAR(1) model may lead to inappropriate conclusions (due to correlation in the coefficients). Indeed, we shall see in this section that it is not consistent for $\alpha \neq 0$, and we will provide its limiting value. We will also establish that it remains asymptotically normal. This estimator will be described as the \textit{usual} one afterwards. Denote by
\begin{equation}
\label{LimT}
\theta^{*} = \frac{\theta}{1 - 2\, \alpha\, \tau_2}
\end{equation}
and recall that $2\, \alpha\, \tau_2 \neq 1$. The asymptotic variance $\omega^2$ in the central limit theorem will be built step by step in Section \ref{SecProofThmOLS} and given in \eqref{VarOmega}.

\begin{thm}
\label{ThmOLS}
Assume that (H$_1$)--(H$_3$) hold. Then as $n$ tends to infinity, we have the almost sure convergence
\begin{equation}
\label{CvgOLS}
\wh{\theta}_{n} \cvgas \theta^{*}.
\end{equation}
In addition, if (H$_4$) holds, we have the asymptotic normality
\begin{equation}
\label{TlcOLS}
\sqrt{n}\, \big( \wh{\theta}_{n} - \theta^{*} \big) \cvgl \cN(0, \omega^2).
\end{equation}
\end{thm}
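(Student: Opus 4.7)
The plan is to treat the decomposition
\[
\sqrt{n}\bigl(\wh{\theta}_n - \theta^{*}\bigr) = \frac{n^{-1/2}\sum_{t=1}^n Z_t}{n^{-1}\sum_{t=1}^n X_{t-1}^2}, \qquad Z_t := X_{t-1}(X_t - \theta^{*} X_{t-1}),
\]
separately in its numerator and denominator. Ergodicity of $(X_t)$ from Theorem \ref{ThmCausal} combined with the finite second-moment condition in (H$_3$) gives the almost sure limit $n^{-1}\sum X_{t-1}^2 \to \gamma_X(0) > 0$. For the consistency \eqref{CvgOLS}, substituting \eqref{XAR}--\eqref{MACoef} together with the identity $\theta - \theta^{*} = -2\alpha\tau_2\theta^{*}$ yields
\[
Z_t = X_{t-1}\veps_t + \eta_t X_{t-1}^2 + \alpha(\eta_{t-1} - 2\tau_2\theta^{*}) X_{t-1}^2.
\]
The ergodic theorem then gives $n^{-1}\sum Z_t \to \dE[Z_0]$ a.s., and $\dE[Z_0] = 0$ reduces to $\dE[\eta_0 X_0^2] = 2\tau_2\theta^{*}\gamma_X(0)$, which is directly readable from the second coordinate of the stationary moment vector $\sigma_2(I_3 - M)^{-1} U_0$ underlying \eqref{ACV}.

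For the asymptotic normality, the main difficulty is that $Z_t$ is not a martingale difference with respect to $(\cF_t)$, because the summand $\alpha\eta_{t-1} X_{t-1}^2$ is already $\cF_{t-1}$-measurable. I would write $Z_t = A_t + B_t + C_t$ with $A_t := X_{t-1}\veps_t$, $B_t := \eta_t X_{t-1}^2$ (both $\cF_t$-martingale differences) and $C_t := \alpha X_{t-1}^2(\eta_{t-1} - 2\tau_2\theta^{*})$, and handle $\sum C_t$ by a one-step martingale approximation. A direct computation using (H$_1$)--(H$_2$) gives
\[
\dE\bigl[\eta_{t-1} X_{t-1}^2 \,\big|\, \cF_{t-2}\bigr] = 2\tau_2(\theta + \alpha\eta_{t-2}) X_{t-2}^2,
\]
so that $\mathcal{E}_t := \eta_{t-1} X_{t-1}^2 - 2\tau_2(\theta + \alpha\eta_{t-2}) X_{t-2}^2$ is $\cF_{t-1}$-measurable with $\dE[\mathcal{E}_t \mid \cF_{t-2}] = 0$. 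Summing the recursion $\eta_{t-1} X_{t-1}^2 = 2\theta\tau_2 X_{t-2}^2 + 2\alpha\tau_2\, \eta_{t-2} X_{t-2}^2 + \mathcal{E}_t$, shifting indices and using the identity $2\alpha\theta\tau_2/(1-2\alpha\tau_2) = 2\alpha\tau_2\theta^{*}$ (which cancels the drift) delivers
\[
\sum_{t=1}^n C_t = \frac{\alpha}{1-2\alpha\tau_2}\sum_{t=1}^n \mathcal{E}_t + O(1) \quad \text{a.s.}
\]
Since $\mathcal{E}_{t+1}$ is $\cF_t$-measurable with $\dE[\mathcal{E}_{t+1}\mid\cF_{t-1}] = 0$, the quantity
\[
\Delta_t := X_{t-1}\veps_t + \eta_t X_{t-1}^2 + \frac{\alpha}{1-2\alpha\tau_2}\,\mathcal{E}_{t+1}
\]
is a genuine $(\cF_t)$-martingale difference with $\sum_{t=1}^n Z_t = \sum_{t=1}^n \Delta_t + O(1)$ a.s.

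The proof then concludes by applying a standard stationary-ergodic martingale central limit theorem to $(\Delta_t)$. The $L^2$ integrability $\dE[\Delta_0^2] < \infty$ is precisely where (H$_4$) and $\rho(H) < 1$ enter: they guarantee the existence of a stationary fourth-moment vector of $(X_t)$ built from \eqref{H} via a system analogous to the one of Theorem \ref{ThmStat}, hence $\dE[X_0^4] < \infty$. The Lindeberg condition follows from strict stationarity together with this $L^2$ bound, while ergodicity gives $n^{-1}\sum_t \dE[\Delta_t^2 \mid \cF_{t-1}] \to \dE[\Delta_0^2]$ a.s. Slutsky's lemma then yields \eqref{TlcOLS} with $\omega^2 = \dE[\Delta_0^2]/\gamma_X(0)^2$, and the explicit formula \eqref{VarOmega} is obtained by expanding $\dE[\Delta_0^2]$ in terms of the vectors $U_0, \ldots, U_2$ and $V_0, \ldots, V_4$. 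The main obstacle is the construction of the martingale decomposition for $\sum C_t$: the telescoping only succeeds because the particular centring by $\theta^{*}$ makes the drift in the recursion satisfied by $\eta_{t-1} X_{t-1}^2$ vanish; with any other value the residual would be of order $n$ and the CLT would fail.
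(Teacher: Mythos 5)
Your proof is correct and follows essentially the same route as the paper: the almost sure limit via the ergodic theorem combined with the stationarity identity $\lambda_1 = 2\,\tau_2\,\theta^{*}\lambda_0$ read off from $\Lambda = \sigma_2 (I_3-M)^{-1} U_0$, and the CLT via a one-step martingale approximation of the $\cF_{t-1}$--measurable term $\alpha\,\eta_{t-1}X_{t-1}^{\,2}$ followed by a stationary-ergodic martingale central limit theorem — in fact $(1-2\,\alpha\,\tau_2)\,\Delta_t$ coincides term by term with $\Omega_{6}^{\, T}\Delta\cM_{t}$ from the paper's six-dimensional martingale \eqref{VecMart}, so your scalar packaging is just a condensed form of the same object. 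The only inaccuracy is the claim that the telescoping remainder is $O(1)$ a.s.: the boundary terms $\eta_{n-1}X_{n-1}^{\,2}$, $X_{n-1}^{\,2}$ and $\mathcal{E}_{n+1}$ are stationary, not bounded, and are only $o(\sqrt{n})$ a.s. under (H$_4$) — which is exactly what the CLT requires, so the conclusion stands.
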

\begin{proof}
See Section \ref{SecProofThmOLS}.
\end{proof}

\begin{rem}
\label{RemPartCaseOLS}
For $\alpha=0$, $\theta^{*} = \theta$ and, as it is well-known, the estimation is consistent for $\theta$. In addition, the coefficients matrix $K$ defined in \eqref{K} takes the very simplified form where each term is zero except $K_{11} = \sigma_2$ and $K_{22} = \tau_2$. Similarly, only the first columns of $M$ and $H$ are nonzero. Then, letting $\lambda_0 = \dE[X_{t}^2] = \gamma_{X}(0)$ and $\delta_0 = \dE[X_{t}^4]$ as in the associated proof, the asymptotic variance is now
$$
\omega^2_0 = \frac{\sigma_2}{\lambda_0} + \frac{\tau_2\, \delta_0}{\lambda_0^2}.
$$
One can check that this is a result of Thm. 4.1 in \cite{NichollsQuinn81b}, in the particular case of the RCAR(1) process but under more natural hypotheses (they assume that $\dE[X_{t}^4] < \infty$ while we derive it from some moments conditions on the noises). Explicitly, it is given by
\begin{equation}
\label{VarOmega0}
\omega^2_0 = \frac{(1 - \theta^2 - \tau_2)\, (\tau_2\, \sigma_4\, (\theta^2 + \tau_2 - 1) + \sigma_2^2\, (\theta^4 + \tau_4 - 6\, \tau_2^2 - 1) )}{\sigma_2^2\, (\theta^4 + \tau_4 + 6\, \theta^2\, \tau_2 - 1) }.
\end{equation}
If in addition $\tau_2 = \tau_4 = 0$, we find that
\begin{equation}
\label{VarOmega00}
\omega^2_{00} = 1-\theta^2
\end{equation}
which is a result stated in Prop. 8.10.1 of \cite{BrockwellDavis96}, for example.
\end{rem}

\begin{rem}
\label{RemMom4H0}
For $\alpha=0$, the set of eigenvalues of $H$ is $\{ \theta^4 + 6\, \theta^2\, \tau_2 + \tau_4, 0, 0, 0, 0 \}$. Thus, the assumption $\rho(H) < 1$ reduces to $\theta^4 + 6\, \theta^2\, \tau_2 + \tau_4 < 1$, which may be seen as a condition of existence of fourth-order moments for the RCAR(1) process.
\end{rem}

\begin{thm}
\label{ThmOLSRat}
Assume that (H$_1$)--(H$_4$) hold. Then as $n$ tends to infinity, we have the rates of convergence
\begin{equation}
\label{LfqOLS}
\frac{1}{\ln n} \sum_{t=1}^{n} \big( \wh{\theta}_{t} - \theta^{*} \big)^2 \cvgas \omega^2
\end{equation}
and
\begin{equation}
\label{LilOLS}
\limsup_{n\, \rightarrow\, +\infty}~ \frac{n}{2\, \ln \ln n} \big( \wh{\theta}_{n} - \theta^{*} \big)^2 = \omega^2 \hspp \textnormal{a.s.}
\end{equation}
\end{thm}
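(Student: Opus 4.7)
The plan is to upgrade the central limit theorem \eqref{TlcOLS} to almost sure rates by invoking the law of the iterated logarithm and the quadratic strong law for square integrable martingales. Starting from
\begin{equation*}
\wh{\theta}_n - \theta^* = \frac{\sum_{t=1}^n X_{t-1}(X_t - \theta^*\, X_{t-1})}{\sum_{t=1}^n X_{t-1}^2},
\end{equation*}
I would reuse the martingale structure already built in the proof of Theorem \ref{ThmOLS}. Since $X_t - \theta^*\, X_{t-1} = \eta_t\, X_{t-1} + \veps_t + \alpha\, (\eta_{t-1} - 2\, \tau_2\, \theta^*)\, X_{t-1}$, the numerator decomposes as a primary $(\cF_t)$-martingale $M_n$ with $\Delta M_t = \eta_t\, X_{t-1}^2 + \veps_t\, X_{t-1}$, plus a stationary drift $R_n = \alpha \sum_{t=1}^n (\eta_{t-1} - 2\, \tau_2\, \theta^*)\, X_{t-1}^2$. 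Because the summand of $R_n$ is a stationary ergodic sequence with mean zero but $\cF_{t-1}$-measurable, a second martingale decomposition at the $\eta_{t-1}$-level rewrites it as an auxiliary martingale $\wt{M}_n$ plus a negligible term, and the sum $\cM_n = M_n + \wt{M}_n$ is the object to which the limit theorems will be applied. It should be built so that $\langle \cM \rangle_n / n \to \lambda_0^2\, \omega^2$ a.s., with $\lambda_0 = \gamma_X(0)$, this identification being exactly the variance calculation already performed for Theorem \ref{ThmOLS}. By Theorem \ref{ThmCausal} and Birkhoff's theorem, $\frac{1}{n} \sum_{t=1}^n X_{t-1}^2 \to \lambda_0$ a.s.

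Then I would invoke the law of the iterated logarithm for square integrable martingales (Stout's LIL, or Theorem 1.3.15 in Duflo \cite{Duflo97}), whose Lyapunov-type assumption is secured under (H$_4$) via $\rho(H) < 1$:
\begin{equation*}
\limsup_{n \to \infty}\, \frac{\cM_n^2}{2\, \langle \cM \rangle_n\, \ln \ln \langle \cM \rangle_n} = 1 \hspp \textnormal{a.s.}
\end{equation*}
Combining with $\langle \cM \rangle_n \sim n\, \lambda_0^2\, \omega^2$ and the almost sure limit of the denominator, dividing by $(n\, \lambda_0)^2$ produces \eqref{LilOLS}. For \eqref{LfqOLS}, the corresponding quadratic strong law for martingales (e.g.\ Theorem 1.3.24 in Duflo \cite{Duflo97}) applied to $(\cM_n)$ yields
\begin{equation*}
\frac{1}{\ln n}\, \sum_{t=1}^n \Bigl( \frac{\cM_t}{t\, \lambda_0} \Bigr)^2 \longrightarrow \omega^2 \hspp \textnormal{a.s.},
\end{equation*}
which transfers to $\wh{\theta}_t - \theta^*$ through the representation above.

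The technical heart is the construction of $\wt{M}_n$ and the control of its residual. In the CLT proof it is enough to show $R_n - \wt{M}_n = o_{\dP}(\sqrt{n})$, whereas here I need this approximation to hold almost surely with rate $o(\sqrt{n\, \ln \ln n})$ for the LIL and slightly better for the quadratic strong law. Assumption (H$_4$) via $\rho(H) < 1$ provides the finite fourth-order moments of the process needed for these bounds, while careful bookkeeping must ensure that the joint bracket of $M_n$ and $\wt{M}_n$ reconstructs exactly the constant $\omega^2$ appearing in \eqref{VarOmega}.
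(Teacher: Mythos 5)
Your proposal is correct and follows essentially the same route as the paper: the numerator of $\wh{\theta}_{n}-\theta^{*}$ is approximated by a square-integrable martingale with stationary ergodic increments plus an a.s.\ negligible residual (the paper packages this as the contraction $\Omega_{6}^{\,T}\cM_{n}$ of the six-component vector martingale already built for Theorem \ref{ThmOLS}), and \eqref{LilOLS} and \eqref{LfqOLS} then follow from Stout's law of the iterated logarithm and a quadratic strong law for martingales (the paper invokes Thm.~2.1 of \cite{ChaabaneMaaouia00}, checking the a.s.\ Lindeberg condition and $[\cM]_{n}-\langle\cM\rangle_{n}=o(n)$ by ergodicity). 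One caution: your clause ``whose Lyapunov-type assumption is secured under (H$_4$)'' would not go through literally, since $\rho(H)<1$ yields exactly $\dE[X_{t}^{4}]<\infty$, i.e.\ square-integrable martingale increments and nothing stronger, so you must use the LIL for martingales with stationary ergodic increments (\cite{Stout70}), for which square integrability suffices, rather than a version requiring $(2+\delta)$-moments of the increments.
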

\begin{proof}
See Section \ref{SecProofThmOLSRat}.
\end{proof}

\begin{rem}
The above theorem leads to the usual rate of convergence for the estimation of parameters driving stable models,
\begin{equation}
\label{RatOLS}
\big( \wh{\theta}_{n} - \theta^{*} \big)^2 = O\!\left( \frac{\ln \ln n}{n} \right) \hspp \textnormal{a.s.}
\end{equation}
\end{rem}

\begin{rem}
Even if it is of reduced statistical interest, the same rates of convergence may be reached for $\bar{X}_{n}$.
\end{rem}

Finally we build the residual set given, for all $1 \leq t \leq n$, by
\begin{equation}
\label{ResSet}
\wh{\veps}_{t} = X_{t} - \wh{\theta}_{n}\, X_{t-1}.
\end{equation}
The usual estimator of $\sigma_2$ is defined as
\begin{equation}
\label{EstOLSVar}
\wh{\sigma}_{2, n} = \frac{1}{n}\, \sum_{t=1}^{n} \wh{\veps}_{t}^{~ 2}.
\end{equation}
Denote by
\begin{equation}
\label{LimS2}
\sigma_2^{*} = \big( 1 - (\theta^{*})^2 \big)\, \gamma_{X}(0).
\end{equation}

\begin{thm}
\label{ThmCvgS2}
Assume that (H$_1$)--(H$_3$) hold. Then as $n$ tends to infinity, we have the almost sure convergence
\begin{equation}
\label{CvgOLS}
\wh{\sigma}_{2, n} \cvgas \sigma_2^{*}.
\end{equation}
\end{thm}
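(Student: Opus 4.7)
The plan is to expand the squared residuals and reduce everything to three ergodic averages plus the already-established limit of $\wh{\theta}_n$. Writing
\begin{equation*}
\wh{\sigma}_{2,n} = \frac{1}{n}\sum_{t=1}^{n} X_{t}^{\, 2} \;-\; 2\,\wh{\theta}_{n}\,\frac{1}{n}\sum_{t=1}^{n} X_{t-1}\, X_{t} \;+\; \wh{\theta}_{n}^{\, 2}\,\frac{1}{n}\sum_{t=1}^{n} X_{t-1}^{\, 2},
\end{equation*}
the problem splits into controlling each of the three empirical moments separately.

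For the three averages, I would invoke Birkhoff's ergodic theorem. Theorem \ref{ThmCausal} already gives strict stationarity and ergodicity of $(X_{t})$, and the comment following that theorem extends the property to any measurable functional such as $X_{t}^{\, 2}$ or $X_{t-1} X_{t}$; under (H$_3$), Theorem \ref{ThmStat} ensures these have finite mean. Hence, almost surely,
\begin{equation*}
\frac{1}{n}\sum_{t=1}^{n} X_{t}^{\, 2} \cvgas \gamma_{X}(0), \hsp \frac{1}{n}\sum_{t=1}^{n} X_{t-1}^{\, 2} \cvgas \gamma_{X}(0), \hsp \frac{1}{n}\sum_{t=1}^{n} X_{t-1}\, X_{t} \cvgas \gamma_{X}(1).
\end{equation*}
Combined with $\wh{\theta}_{n} \cvgas \theta^{*}$ from Theorem \ref{ThmOLS}, Slutsky-type manipulations yield $\wh{\sigma}_{2,n} \cvgas (1 + (\theta^{*})^2)\,\gamma_{X}(0) - 2\,\theta^{*}\, \gamma_{X}(1)$ almost surely.

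To match the announced limit $\sigma_2^{*} = (1 - (\theta^{*})^2)\,\gamma_{X}(0)$, the one identity I need is $\gamma_{X}(1) = \theta^{*}\, \gamma_{X}(0)$, equivalently $\rho_{X}(1) = \theta^{*}$. This comes for free from the proof of Theorem \ref{ThmOLS}: writing $\wh{\theta}_{n}$ as the ratio of the last two averages above and passing to the almost sure limit gives precisely $\theta^{*} = \gamma_{X}(1)/\gamma_{X}(0)$, where the denominator is nonzero in view of Remark \ref{RemPartCase}. Alternatively one may verify it directly from formula \eqref{ACV} with $h=1$, since $N\,(I_3-M)^{-1}U_0$ evaluated on its first coordinate yields $\theta^{*}\,[(I_3-M)^{-1}U_0]_1$ up to the factor $\sigma_2$; this is in fact the same computation performed when deriving $\theta^{*}$.

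There is no real obstacle here: once the ergodicity of the squared and lagged processes is invoked and the limiting value of $\wh{\theta}_{n}$ is plugged in, the algebra collapses to the desired expression. The only care required is to check the interchange between the sums $\sum_{t=1}^{n}$ and the ones shifted by one index (negligible boundary contribution of $X_{0}^{\,2}/n$ and $X_{n}^{\,2}/n$, which vanish a.s. by the ergodic theorem applied again to $X_{t}^{\,2}$).
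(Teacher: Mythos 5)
Your proposal is correct and follows essentially the same route as the paper: expand the squared residuals, apply the ergodic theorem to the three empirical second moments, plug in $\wh{\theta}_{n} \cvgas \theta^{*}$, and use the identity $\gamma_{X}(1) = \theta^{*}\, \gamma_{X}(0)$ (which the paper likewise extracts from the limiting value of $\wh{\theta}_{n}$) to collapse the algebra to $\sigma_2^{*}$. The only difference is that you spell out the boundary-term and ergodicity details that the paper leaves implicit.
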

\begin{proof}
By ergodicity, the development of $\wh{\sigma}_{2, n}$ in \eqref{EstOLSVar} leads to
\begin{equation*}
\wh{\sigma}_{2, n} \cvgas \big( 1 + (\theta^{*})^2 \big)\, \gamma_{X}(0) - 2\, \theta^{*}\, \gamma_{X}(1).
\end{equation*}
But the definition of $\wh{\theta}_{n}$ in \eqref{EstOLS} also implies $\gamma_{X}(1) = \theta^{*}\, \gamma_{X}(0)$, leading to $\sigma_2^{*}$.
\end{proof}

\begin{rem}
For $\alpha=0$, \eqref{LimS2} becomes
\begin{equation}
\label{LimS20}
\sigma_{2,0}^{*} = \frac{\sigma_2\, (1 - \theta^2)}{1 - \theta^2 - \tau_2}.
\end{equation}
In their work, Nicholls and Quinn \cite{NichollsQuinn81b} have taken into consideration the fact that this estimator of $\sigma_2$ was not consistent, that is the reason why they suggested a modified estimator that we will take up in the next section. Now if $\tau_2=0$, we reach the well-known consistency.
\end{rem}

%%%%%%%%%%%%%%%%%%%%%%%%%%%%%%%%%%%%%%%%%%%%%%%%%%%%%%%%%%%%%%%%%%%%%%%%%%%%%%%%

%%%%%%%%%%%%%%%%%%%%%%%%%%%%%%%%%%%%%%%%%%%%%%%%%%%%%%%%%%%%%%%%%%%%%%%%%%%%%%%%
\section{A test for correlation in the coefficients}
\label{SecTest}

We now apply a Yule-Walker approach up to the second-order autocorrelation. Using the notations of Theorem \ref{ThmStat} and letting $\gamma = \alpha\, \tau_2$,
\begin{equation*}
\left\{
\begin{array}{lcl}
(1 - 2\, \rho_{X}^2(1))\, \theta & = & (1 - 2\, \rho_{X}(2))\, \rho_{X}(1) \\
(1 - 2\, \rho_{X}^2(1))\, \gamma & = & \rho_{X}(2) - \rho_{X}^2(1).
\end{array}
\right.
\end{equation*}
By ergodicity, a consistent estimation of $\theta^{*} = \rho_{X}(1)$ and $\vartheta^{*} = \rho_{X}(2)$ is achieved \textit{via}
\begin{equation}
\label{OLSEst2}
\wh{\theta}_{n} = \frac{\sum_{t=1}^{n} X_{t-1} X_{t}}{\sum_{t=1}^{n} X_{t-1}^{\, 2}} \hsp \text{and} \hsp \wh{\vartheta}_{n} = \frac{\sum_{t=2}^{n} X_{t-2} X_{t}}{\sum_{t=2}^{n} X_{t-2}^{\, 2}}
\end{equation}
respectively. We define the mapping from $[-1\,;\,1]\backslash\{\pm \frac{1}{\sqrt{2}}\} \times [-1\,;\,1]$ to $\dR^2$ as
\begin{equation}
\label{MapDelta}
f : (x,y) \mapsto \left( \frac{(1-2y)x}{1 - 2x^2}, ~ \frac{y-x^2}{1 - 2x^2} \right)
\end{equation}
and the new couple of estimates
\begin{equation}
\label{NewEst}
( \wt{\theta}_{n}, ~ \wt{\gamma}_{n} ) = f( \wh{\theta}_{n}, \wh{\vartheta}_{n}).
\end{equation}
To be consistent with \eqref{MapDelta}, we assume in the sequel that $\sqrt{2}\, \theta \neq \pm (1-2\, \alpha\, \tau_2)$. We also assume that $\psi^{\, 0}_{\, 0} \neq 0$, where $\psi^{\, 0}_{\, 0}$ is described below. Since it seems far too complicated, we do not give any reduced form to the latter hypothesis, instead we gather in $\Theta^{*} = \{ \sqrt{2}\, \theta = \pm (1-2\, \alpha\, \tau_2) \} \cup \{ \psi^{\, 0}_{\, 0} = 0 \}$ the pathological cases and we pick the parameters outside $\Theta^{*}$ to conclude our study. It obviously follows that $\wt{\theta}_{n} \overset{\textnormal{a.s.}}{\longrightarrow} \theta$ and $\wt{\gamma}_{n} \overset{\textnormal{a.s.}}{\longrightarrow} \gamma$. In the following theorem, we establish the asymptotic normality of these new estimates, useful for the testing procedure. We denote by $\nabla f$ the Jacobian matrix of $f$.

\begin{thm}
\label{ThmTlcNewEst}
Assume that (H$_1$)--(H$_4$) hold. Then as $n$ tends to infinity, we have the asymptotic normality
\begin{equation}
\label{TlcNewEst}
\sqrt{n}\, \begin{pmatrix}
\wt{\theta}_{n} - \theta \\
\wt{\gamma}_{n} - \gamma
\end{pmatrix} \cvgl \cN(0, \Psi)
\end{equation}
where $\Sigma$ is a covariance given in \eqref{Sig} and
\begin{equation}
\label{Psi}
\Psi = \nabla^{\, T} f(\theta^{*}, \vartheta^{*})\, \Sigma\, \nabla f(\theta^{*}, \vartheta^{*}).
\end{equation}
\end{thm}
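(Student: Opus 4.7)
The plan is a two-step argument: first establish the joint central limit theorem for $(\wh{\theta}_n, \wh{\vartheta}_n)$ at $(\theta^*, \vartheta^*)$, then transfer it through the mapping $f$ by the multivariate delta method. Write
\[
\wh{\theta}_n - \theta^* = \frac{A_n^{(1)}}{B_n^{(1)}}, \qquad \wh{\vartheta}_n - \vartheta^* = \frac{A_n^{(2)}}{B_n^{(2)}},
\]
with $A_n^{(k)} = \tfrac{1}{n}\sum_t X_{t-k}(X_t - \rho_k^* X_{t-k})$, $B_n^{(k)} = \tfrac{1}{n}\sum_t X_{t-k}^2$ and the shorthand $\rho_1^* = \theta^*$, $\rho_2^* = \vartheta^*$. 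Under (H$_1$)--(H$_3$), ergodicity (Theorem \ref{ThmCausal}) together with the second-order structure of Theorem \ref{ThmStat} yield $B_n^{(k)} \cvgas \gamma_X(0)$ and, via the Yule-Walker identities $\gamma_X(k) = \rho_k^* \gamma_X(0)$, also $A_n^{(k)} \cvgas 0$. Consistency of $(\wh{\theta}_n, \wh{\vartheta}_n)$ for $(\theta^*, \vartheta^*)$ is then immediate; it already appears for $\wh{\theta}_n$ in Theorem \ref{ThmOLS}, and the argument for $\wh{\vartheta}_n$ is identical with lag $2$ replacing lag $1$.

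For the joint CLT we adapt the martingale technique underlying Theorem \ref{ThmOLS}. With $\cF_t$ as in \eqref{Filtr}, the key observation is that
\[
\dE[X_t\mid\cF_{t-1}] = (\theta + \alpha\eta_{t-1})X_{t-1},
\]
so $\xi_t := X_t - \dE[X_t\mid\cF_{t-1}] = \eta_t X_{t-1} + \veps_t$ is an $(\cF_t)$-martingale difference. Decompose $X_{t-k}(X_t - \rho_k^* X_{t-k}) = X_{t-k}\,\xi_t + R_t^{(k)}$ where $R_t^{(k)} = X_{t-k}\dE[X_t\mid\cF_{t-1}] - \rho_k^* X_{t-k}^2$ is $\cF_{t-1}$-measurable with mean zero by Yule-Walker. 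The martingale piece $\sum_t (X_{t-1}\xi_t, X_{t-2}\xi_t)^T$ is square-integrable under (H$_4$); the residual piece is absorbed through a further finite martingale-coboundary decomposition, conditioning in turn onto $\cF_{t-2}$, which is admissible thanks to the short memory of $(\theta_t)$. The multivariate martingale CLT of Duflo \cite{Duflo97} then applies: the Lindeberg condition is supplied by the fourth-order moments from (H$_4$), and the predictable bracket converges almost surely, by ergodicity of polynomial functionals of $(X_t, \eta_t, \veps_t)$, to a deterministic matrix whose finiteness is precisely what $\rho(H)<1$ ensures. Combining with Slutsky's lemma through $B_n^{(k)} \cvgas \gamma_X(0)$ delivers
\[
\sqrt{n}\begin{pmatrix} \wh{\theta}_n - \theta^* \\ \wh{\vartheta}_n - \vartheta^* \end{pmatrix} \cvgl \cN(0,\Sigma)
\]
with $\Sigma$ given in \eqref{Sig}.

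The second step is then immediate. The mapping $f$ from \eqref{MapDelta} is $C^\infty$ on its domain, and the exclusion $\sqrt{2}\theta \neq \pm(1-2\alpha\tau_2)$ collected in $\Theta^*$ is exactly the condition $2(\theta^*)^2 \neq 1$ that places $(\theta^*, \vartheta^*)$ away from the singular set of $f$; hence $\nabla f(\theta^*,\vartheta^*)$ is well defined. Since the Yule-Walker equations preceding \eqref{OLSEst2} give $f(\theta^*, \vartheta^*) = (\theta, \gamma)$, the multivariate delta method applied to \eqref{NewEst} produces
\[
\sqrt{n}\begin{pmatrix} \wt{\theta}_n - \theta \\ \wt{\gamma}_n - \gamma \end{pmatrix} \cvgl \cN\!\left(0,\; \nabla^T\! f(\theta^*,\vartheta^*)\,\Sigma\,\nabla f(\theta^*,\vartheta^*)\right) = \cN(0,\Psi),
\]
which is the claim.

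The main obstacle lies entirely in the joint martingale CLT, more precisely in the explicit identification of $\Sigma$: the two-layer decomposition produces many cross-terms whose variances and covariances force a detailed bookkeeping of fourth-order moments of products $\veps_t^{\,a}\eta_t^{\,b} X_t^{\,c}$, which is exactly what the matrix $H$ was introduced to track. The auxiliary condition $\psi^{\,0}_{\,0}\neq 0$ bundled into $\Theta^*$ then serves to preclude degeneracy of $\Psi$ and keep the limiting law non-trivial.
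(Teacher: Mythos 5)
Your architecture matches the paper's exactly: a joint martingale CLT for $(\wh{\theta}_{n},\wh{\vartheta}_{n})$ at $(\theta^{*},\vartheta^{*})$, followed by the Delta method for $f$, with the singular set of $f$ excluded by $\sqrt{2}\,\theta\neq\pm(1-2\,\alpha\,\tau_2)$. The second step is fine as written (and your observation that this exclusion is precisely $2(\theta^{*})^2\neq 1$ is correct). The gap is in the first step, at the sentence ``the residual piece is absorbed through a further finite martingale-coboundary decomposition, conditioning in turn onto $\cF_{t-2}$, which is admissible thanks to the short memory of $(\theta_{t})$.'' Short memory of $(\theta_{t})$ does not by itself make $R_{t}^{(k)}$ a finite-order coboundary: for $k=1$ one has $R_{t}^{(1)}=\alpha\,\eta_{t-1}X_{t-1}^{2}+(\theta-\theta^{*})X_{t-1}^{2}$, and $X_{t-1}^{2}$ depends on the entire past, so conditioning one more step does not terminate anything. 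What actually makes the reduction work is that the conditional-expectation operator acts as a \emph{finite} matrix ($M$, and at fourth order $G$ and $H$) on the finite-dimensional space spanned by $\{\eta_{t}^{\,a}X_{t}^{\,2}\}$ (resp.\ $\{\eta_{t}^{\,a}X_{t}^{\,4}\}$), with $\rho(M)<1$ and $\rho(H)<1$ guaranteeing invertibility of $I-M$ and square-integrability of the resulting martingale increments. This is exactly the content of the paper's Lemmas on $U_{k,h}$, $V_{k}$, $W_{\ell,k}$ and of the explicit construction of the components $M_{n}^{(1)},\dots,M_{n}^{(6)}$ and the additional scalar martingale $\cL_{n}$ for the lag-2 term; it is the heart of the proof and cannot be waved through by appeal to the MA(1) structure alone.

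A second, related shortfall: the theorem does not merely assert asymptotic normality with \emph{some} covariance, it pins $\Sigma$ down as the matrix $A\,\Sigma_{\textnormal{ML}}\,A^{\,T}$ of \eqref{Sig}, built from the bracket limits $K\circ\Gamma$, $(L\circ\Upsilon)\,\Omega_{6}$ and $\ell$ of the seven-dimensional martingale $(\cM_{n},\cL_{n})$. Deferring this to ``detailed bookkeeping'' leaves the statement unproved as written, since identifying $\Sigma$ is what makes the test statistic of Section \ref{SecTest} computable. Finally, a minor point: the condition $\psi^{\,0}_{\,0}\neq 0$ plays no role in this theorem --- the CLT holds whether or not $\Psi$ is degenerate; that condition is only needed later so that the normalized statistic $n\,(\wt{\gamma}_{n})^{2}/\wh{\psi}_{n}^{\,0}$ has a nondegenerate $\chi^{2}_{1}$ limit under $\cH_0$.
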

\begin{proof}
See Section \ref{SecProofThmTlcNewEst}.
\end{proof}
Assuming random coefficients (that is, $\tau_2 > 0$), note that $\gamma = 0 \Leftrightarrow \alpha=0$. Our last objective is to build a testing procedure for
\begin{equation}
\label{TestCorr}
\cH_0\,:\,``\alpha = 0" \hsp \text{vs} \hsp \cH_1\,:\,``\alpha \neq 0".
\end{equation}
As it is explained in Remark \ref{RemSig}, despite its complex structure, $\Psi$ only depends on the parameters. Let $\psi = \psi(\theta, \alpha, \{ \tau_{k} \}_{2,4,6,8}, \{ \sigma_{\ell} \}_{2,4})$ be the the lower right element of $\Psi$, and $\psi^{\, 0} = \psi(\theta, 0, \{ \tau_{k} \}_{2,4,6,8}, \{ \sigma_{\ell} \}_{2,4})$. The explicit calculation under $\cH_0$ gives $\theta^{*} = \theta$, $\vartheta^{*} = \theta^2$ and
\begin{equation}
\psi^{\, 0} = \frac{\psi^{\, 0}_{\, 0}}{(1 - 2\, \theta^2)^2\, \sigma_2^2\, (\theta^4 + 6\, \theta^2\, \tau_2 + \tau_4 - 1)}
\end{equation}
where the numerator is given by
\begin{eqnarray*}
\psi^{\, 0}_{\, 0} & = & (\tau_2 + \theta^2 - 1)\, \big[ \sigma_4\, \tau_2\, ( (6\, \theta^2  - 1)\, \tau_2^2 + ( 8\, \theta^4 - 9\, \theta^2 + 1)\, \tau_2 \\
 & & \hsp \hsp \hsp + ~ 2\, \theta^2\, (\theta^2 - 1)^2) + \sigma_2^2\, \tau_2\, (-36\, \tau_2^2\, \theta^2 + 6\, \tau_2^2 - 12\, \tau_2\, \theta^4 \\
 & & \hsp \hsp \hsp + ~ 12\, \tau_2\, \theta^2 - 6\, \theta^6 + 17\, \theta^4 + 6\, \tau_4\, \theta^2 - 12\, \theta^2 - \tau_4 + 1) \\ \nonumber
 & & \hsp \hsp \hsp + ~ \sigma_2^2\, (\theta^6 - \theta^4 + \theta^2\, \tau_4 - \theta^2 - \tau_4 + 1) \big]
\end{eqnarray*}
and assumed to be nonzero (by excluding $\Theta^{*}$). As a corollary, $\psi^{\, 0}$ continuously depends on the parameters under our additional hypothesis (see Remark \ref{RemMom4H0}). Suppose also that (H$_5$) holds, so that $\psi^{\, 0} = \psi^{\, 0}(\theta, \tau_2, \sigma_2)$, and consider
\begin{equation*}
\wh{\psi}_{n}^{\, 0} = \psi^{\, 0}(\bar{\theta}_{n}, \bar{\tau}_{2,n}, \bar{\sigma}_{2,n})
\end{equation*}
where $\bar{\theta}_{n}$ is either $\wh{\theta}_{n}$ or $\wt{\theta}_{n}$, and $(\bar{\tau}_{2,n}, \bar{\sigma}_{2,n})$ is the couple of estimates suggested by \cite{NichollsQuinn81b} in formulas (3.6) and (3.7) respectively, also given in \cite{Jurgens85}. They are defined as
\begin{equation}
\label{EstVarH0}
\bar{\tau}_{2,n} = \frac{\sum_{t=1}^{n} (Z_{t} - \bar{Z}_{n})\, \wh{\veps}_{t}^{~ 2}}{\sum_{t=1}^{n} (Z_{t} - \bar{Z}_{n})^2} \hsp \text{and} \hsp \bar{\sigma}_{2,n} = \wh{\sigma}_{2, n} - \bar{Z}_{n}\, \bar{\tau}_{2,n}
\end{equation}
where $(\wh{\veps}_{t})$ is the residual set built in \eqref{ResSet}, $\wh{\sigma}_{2, n}$ is given in \eqref{EstOLSVar} and for $t \in \{1, \hdots, n\}$, $Z_{t} = X_{t}^2$. Thm. 4.2 of \cite{NichollsQuinn81b} gives their consistency as soon as the RCAR(1) process has fourth-order moments. Furthermore, our study gives the consistency of $\bar{\theta}_{n}$ under $\cH_0$. We deduce from Slutsky's lemma that
\begin{equation}
\label{StatTest}
\wh{\psi}_{n}^{\, 0} \cvgas \psi^{\, 0} > 0 \hsp \text{and} \hsp 
\frac{n\, \big( \wt{\gamma}_{n} \big)^2}{\wh{\psi}_{n}^{\, 0}} \cvgl \chi^2_1
\end{equation}
if $\cH_0$ is true, where $\chi^2_1$ has a chi-square distribution with one degree of freedom, whereas under $\cH_1$ the test statistic diverges (almost surely). The introduction of (H$_5$) enables to choose
\begin{equation*}
\bar{\sigma}_{4,n} = g(\bar{\sigma}_{2,n}) \hsp \text{and} \hsp \bar{\tau}_{4,n} = h(\bar{\tau}_{2,n})
\end{equation*}
as consistent estimations of the related moments. Comparing the test statistic with the quantiles of $\chi^2_1$ may constitute the basis of a test for the existence of correlation in the random coefficients of an autoregressive process. To conclude, we have shown through this simple model that the introduction of correlation in the coefficients is a significative issue in relation to the inference procedure. And yet, in a time series context it seems quite natural to take account of autocorrelation in the random coefficients, this is an incitement to put statistical conclusions into perspective dealing with estimation and testing procedures of RCAR models. The most challenging extensions for future studies seem to rely on more complex dependency structures in the coefficients, on the consideration of more autoregressions in the model, and of course on the behavior of the process under instability and unit root issues. The testing procedure for correlation in the random coefficients should also be studied on an empirical basis, this is an ongoing investigation.

\medskip

\noindent \textbf{Acknowledgments.} The authors thank the Associate Editor and the anonymous Reviewer for the suggestions and very constructive comments which helped to improve substantially the paper.
%%%%%%%%%%%%%%%%%%%%%%%%%%%%%%%%%%%%%%%%%%%%%%%%%%%%%%%%%%%%%%%%%%%%%%%%%%%%%%%%

\section{Proofs of the main results}
\label{SecProofs}

In this section, we develop the whole proofs of our results. The fundamental tools related to ergodicity may be found in Thm. 3.5.8 of \cite{Stout74} or in Thm. 1.3.3 of \cite{TaniguchiKakizawa00}. We will repeatedly have to deal with $\dE[\eta^{\, a}_{t} (\theta + \eta_{t})^{b}]$ for $a,b \in \{0, \hdots, 4\}$, so we found useful to summarize beforehand the associated values under (H$_2$) in Table \ref{TabExp} below.
\begin{table}[h!]
\begin{center}
\begin{tabular}{|c|c|c|c|c|c|}
\hline
$a \backslash b$ & 0 & 1 & 2 & 3 & 4 \\
\hline
0 & 1 & $\theta$ & $\theta^2 + \tau_2$ & $\theta^3 + 3\, \theta\, \tau_2$ & $\theta^4 + 6\, \theta^2\, \tau_2 + \tau_4$ \\
\hline
1 & 0 & $\tau_2$ & $2\, \theta\, \tau_2$ & $3\, \theta^2\, \tau_2 + \tau_4$ & $4\, \theta^3\, \tau_2 + 4\, \theta\, \tau_4$ \\
\hline
2 & $\tau_2$ & $\theta\, \tau_2$ & $\theta^2\, \tau_2 + \tau_4$ & $\theta^3\, \tau_2 + 3\, \theta\, \tau_4$ & $\theta^4\, \tau_2 + 6\, \theta^2\, \tau_4 + \tau_6$ \\
\hline
3 & 0 & $\tau_4$ & $2\, \theta\, \tau_4$ & $3\, \theta^2\, \tau_4 + \tau_6$ & $4\, \theta^3\, \tau_4 + 4\, \theta\, \tau_6$ \\
\hline
4 & $\tau_4$ & $\theta\, \tau_4$ & $\theta^2\, \tau_4 + \tau_6$ & $\theta^3\, \tau_4 + 3\, \theta\, \tau_6$ & $\theta^4\, \tau_4 + 6\, \theta^2\, \tau_6 + \tau_8$ \\
\hline
\end{tabular}
\bigskip
\end{center}
\caption{\small $\dE[\eta^{\, a}_{t} (\theta + \eta_{t})^{b}]$ for $a,b \in \{0, \hdots, 4\}$.}
\label{TabExp}
\end{table}
For the sake of clarity, we postpone to the appendix the numerous constants that will be used thereafter. We start by giving some short explanations related to the remarks appearing in Sections \ref{SecProp} and \ref{SecOLS}.

\subsection{About the remarks of Sections \ref{SecProp} and \ref{SecOLS}}
\label{SecProofRem}

\subsubsection{Remark \ref{RemPartCase}} Indeed, the explicit calculation of $\gamma_{X}(0)$ based on \eqref{ACV} leads to
\begin{equation*}
\gamma_{X}(0) = \frac{\sigma_2\, (2\, \alpha\, \tau_2 - 1)}{d(\theta, \alpha, \tau_2, \tau_4)}
\end{equation*}
for some denominator satisfying $d(\theta, \alpha, \tau_2, \tau_4) = 2\, \theta^2$ when $2\, \alpha\, \tau_2 = 1$. It follows that should this assumption be true under second-order stationarity, the process would be deterministic.

\subsubsection{Remark \ref{RemInitCond}} The objective here is to show that the difference between the process starting at $X_0$ having the strictly stationary and ergodic distribution and the same process starting at some $Y_0$ is (a.s.) negligible provided very weak assumptions on $Y_0$. Following the idea of Lem. 1 in \cite{AueHorvathSteinebach06} and using the ergodic theorem, we obtain that for a sufficiently large $t$, almost surely
\begin{equation*}
\frac{1}{t} \sum_{\ell=1}^{t} \ln \vert \theta + \alpha\, \eta_{\ell-1} + \eta_{\ell} \vert \leq \frac{\kappa}{2} < 0.
\end{equation*}
Hence, the asymptotic decrease of $\prod_{\ell=1}^{t} \vert \theta + \alpha\, \eta_{\ell-1} + \eta_{\ell} \vert$ is exponentially fast with $t$ under (H$_1$) and the upper bound of $\vert X_{t}-Y_{t} \vert \leq \vert X_0-Y_0 \vert\, \de^{\frac{\kappa\, t}{2}}$ enables to retain weak assumptions on $Y_0$ so that $X_{t}-Y_{t} = o(1)$ a.s.

\subsubsection{Remark \ref{RemPartCaseEmpMean}} In the particular case where $\alpha = \tau_2 = 0$ (that is, in the stable AR(1) process), Thm. 7.1.2 of \cite{BrockwellDavis96} states that $\sqrt{n}\, \bar{X}_{n}$ is asymptotically normal with mean 0 and variance given by
\begin{equation*}
\sum_{h\, \in\, \dZ} \gamma_{X}(h) = \sigma_2\, \Big( \sum_{k=0}^{+\infty} \theta^{k} \Big)^{\! 2} = \frac{\sigma_2}{(1-\theta)^2}.
\end{equation*}
Thus, $\kappa^2_{00}$ implied by our results is coherent from that point of view.

\subsubsection{Remark \ref{RemPartCaseOLS}} Like in the previous remark, Prop. 8.10.1 of \cite{BrockwellDavis96} states that, for $\alpha = \tau_2 = 0$, the OLS estimator of $\theta$ is asymptotically normal with rate $\sqrt{n}$, mean 0 and variance given by $1-\theta^2$, which corresponds to $\omega^2_{00}$. Now if $\tau_2 > 0$, Thm. 4.1 of \cite{NichollsQuinn81b}, and especially formula (4.1), gives the asymptotic variance as a function of $\dE[X_{t}^2]$ and $\dE[X_{t}^4]$ as detailed in Rem. \ref{RemPartCaseOLS}. Our study enables to identify $\omega^2_{0}$ as a function of the parameters by injecting $\alpha=0$ into $\lambda_0$ and $\delta_0$ that are computed in \eqref{LamExpl} and \eqref{DelExpl}, respectively.

\subsection{Proof of Theorem \ref{ThmCausal}}
\label{SecProofThmCausal}
The existence of the almost sure causal representation of $(X_{t})$ under (H$_1$) is a corollary of Thm. 1 of \cite{Brandt86}. Indeed, $(\theta_{t})$ is a stationary and ergodic MA(1) process independent of $(\veps_{t})$, itself obviously stationary and ergodic. Let us give more details. First, hypotheses (H$_1$) enable to make use of the same proof as \cite{AueHorvathSteinebach06} where the ergodic theorem replaces the strong law of large numbers to reach formula (6), and to establish that \eqref{XCausal} is the limit of a convergent series (with probability 1). Then for all $t \in \dZ$,
\begin{eqnarray*}
\theta_{t}\, X_{t-1} & = & (\theta + \alpha\, \eta_{t-1} + \eta_{t}) \left[ \veps_{t-1} + \sum_{k=1}^{\infty} \veps_{t-k-1}\, \prod_{\ell=0}^{k-1} (\theta + \alpha\, \eta_{t-\ell-2} + \eta_{t-\ell-1}) \right] \\
 & = & \sum_{k=1}^{\infty} \veps_{t-k}\, \prod_{\ell=0}^{k-1} (\theta + \alpha\, \eta_{t-\ell-1} + \eta_{t-\ell}) ~ = ~ X_{t} - \veps_{t}
\end{eqnarray*}
meaning that \eqref{XCausal} is a solution to the recurrence equation. Finally, the strict stationarity and ergodicity of $(X_{t})$ may be obtained following the same reasoning as in \cite{NichollsQuinn81b}. Indeed, the causal representation \eqref{XCausal} shows that there exists $\phi$ independent of $t$ such that for all $t \in \dZ$,
$$
X_{t} = \phi((\veps_{t}, \eta_{t}), (\veps_{t-1}, \eta_{t-1}), \hdots).
$$
The set $((\veps_{t}, \eta_{t}), (\veps_{t-1}, \eta_{t-1}), \hdots)$ being made of independent and identically distributed random vectors, $(X_{t})$ is strictly stationary. The ergodicity follows from Thm. 1.3.3 of \cite{TaniguchiKakizawa00}. $\hfill\qed$

\subsection{Proof of Theorem \ref{ThmStat}}
\label{SecProofThmStat}
Ergodicity and strict stationarity come from Theorem \ref{ThmCausal}. We consider the causal representation \eqref{XCausal}. First, since $(\veps_{t})$ and $(\eta_{t})$ are uncorrelated white noises, for all $t \in \dZ$,
\begin{equation}
\label{StatEsp}
\dE[X_{t}] = 0.
\end{equation}
To establish the autocovariance function of $(X_{t})$, we have beforehand to establish a technical lemma related to the second-order properties of the process. For all $k, h \in \dN^{*}$, consider the sequence
\begin{eqnarray*}
u_{0,h}^{(a)} & = & \dE[\eta_{h}^{a}\, \theta_{h}\, \hdots\, \theta_{1}], \\
u_{k,0}^{(a)} & = & \dE[\eta_{k}^{a}\, \theta_{k}^2\, \hdots\, \theta_{1}^2], \\
u_{k,h}^{(a)} & = & \dE[\eta_{k+h}^{a}\, \theta_{k+h}\, \hdots\, \theta_{k+1}\, \theta_{k}^2\, \hdots\, \theta_{1}^2],
\end{eqnarray*}
where $a \in \{ 0,1,2 \}$, and build
\begin{equation}
\label{Ukh}
U_{k,h} = \begin{pmatrix}
u_{k,h}^{(0)} \\
u_{k,h}^{(1)} \\
u_{k,h}^{(2)}
\end{pmatrix}.
\end{equation}
Thereafter, $M$, $N$ and $U_0$ refer to \eqref{M}, \eqref{N} and \eqref{FamVecMom2}, respectively.
\begin{lem}
\label{LemSeq2}
Assume that (H$_1$)--(H$_3$) hold. Then, for all $h, k \in \dN$,
\begin{equation}
\label{SystU2}
U_{k,h} = N^{h}\, M^{k}\, U_0
\end{equation}
with the convention that $U_{0,0} = U_0$.
\end{lem}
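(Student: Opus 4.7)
The plan is to factor the proof into two separate one-step matrix recursions that act independently on each index: first, $U_{k,h+1} = N\, U_{k,h}$ for every $k, h \geq 0$, and second, $U_{k+1, 0} = M\, U_{k,0}$ for every $k \geq 0$. Iterating the first gives $U_{k,h} = N^{h}\, U_{k,0}$ and iterating the second gives $U_{k,0} = M^{k}\, U_0$, which combine into the stated identity. The base case $U_{0,0} = U_0$ is the convention of the statement, and is consistent with (H$_2$) since the empty product equals $1$ and $\dE[\eta_0^{\, a}]$ gives exactly the $a$-th entry of $U_0$ for $a \in \{0, 1, 2\}$.

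For the recursion in $h$, I would exploit that $\theta_{k+h+1} = \theta + \alpha\, \eta_{k+h} + \eta_{k+h+1}$ contains $\eta_{k+h+1}$ only as a linear additive term, while $\eta_{k+h+1}$ is independent of every $\eta_s$ with $s \leq k+h$, hence of both $\eta_{k+h}$ and the entire remaining product $\theta_{k+h}\, \hdots\, \theta_{k+1}\, \theta_k^2\, \hdots\, \theta_1^2$. Substituting the decomposition of $\theta_{k+h+1}$ into the definition of $u_{k,h+1}^{(a)}$, factoring the expectation over $\eta_{k+h+1}$, and invoking (H$_2$) to cancel $\dE[\eta_{k+h+1}]$ and $\dE[\eta_{k+h+1}^{\, 3}]$ would produce
$$
u_{k,h+1}^{(a)} = \dE[\eta_{k+h+1}^{\, a}]\, \big( \theta\, u_{k,h}^{(0)} + \alpha\, u_{k,h}^{(1)} \big) + \dE[\eta_{k+h+1}^{\, a+1}]\, u_{k,h}^{(0)}.
$$
Reading this off for $a = 0, 1, 2$ recovers the first two columns of $N$; the third column is zero because $\eta_{k+h}$ appears only to the first power in $\theta_{k+h+1}$, so the coordinate $u_{k,h}^{(2)}$ is never involved.

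For the recursion in $k$, I would start from the expansion
$$
\theta_{k+1}^{\, 2} = (\theta + \alpha\, \eta_k)^{2} + 2\, (\theta + \alpha\, \eta_k)\, \eta_{k+1} + \eta_{k+1}^{\, 2}
$$
which isolates $\eta_{k+1}$, a variable independent of the remaining factors. Integrating out $\eta_{k+1}$ using (H$_2$) reduces $u_{k+1, 0}^{(a)}$ to a combination of the three scalar quantities $\dE[(\theta + \alpha\, \eta_k)^{b}\, \theta_k^{\, 2}\, \hdots\, \theta_1^{\, 2}]$ for $b \in \{0, 1, 2\}$. Expanding $(\theta + \alpha\, \eta_k)^{b}$ by the binomial formula and reading off the coefficients of $\eta_k^{\, j}$ would then rewrite each of these as a linear combination of $u_{k, 0}^{(0)}$, $u_{k, 0}^{(1)}$ and $u_{k, 0}^{(2)}$. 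Collecting the rows that correspond to $a = 0, 1, 2$ reproduces precisely the three columns of $M$ as defined in \eqref{M}.

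The main obstacle is purely bookkeeping: one has to keep track of the fact that $\eta_k$ is shared between the factor $\theta_{k+1}$ and the factor $\theta_k$ inside the product, so conditioning on $\eta_k$ alone would not decouple the two recursions. The workaround is to always integrate out the \emph{top-index} noise variable, which is genuinely fresh. The three-dimensional state vector $U_{k,h}$ is sized exactly to this computation, since $\theta_{k+1}^{\, 2}$ is quadratic in $\eta_k$ and the highest power of $\eta_k$ to carry through is therefore $2$, which closes the recursion within the basis $\{ U_0, U_1, U_2 \}$. Were (H$_2$) dropped, the odd moments $\tau_1, \tau_3$ would merely contribute extra summands to $M$ and $N$ without altering the two-step structure of the argument.
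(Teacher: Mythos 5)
Your proof is correct and follows essentially the same route as the paper: both establish the one-step recursions $U_{k,h} = N\, U_{k,h-1}$ and $U_{k,0} = M\, U_{k-1,0}$ and iterate, the only difference being that you integrate out the fresh top-index noise directly by independence where the paper phrases the same computation as a conditional expectation with respect to $\cF_{k+h-1}$. Your uniform treatment of the base cases (including $k=0$) is consistent with the paper's convention $U_{0,0}=U_0$ and checks out against the explicit $U_{1,0}$ computed there.
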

\begin{proof}
In the whole proof, $(\cF_{t})$ is the filtration defined in \eqref{Filtr} and Table \ref{TabExp} may be read to compute the coefficients appearing in the calculations. The coefficients $\theta_{k+h-1}$, $\theta_{k+h-2}$, $\hdots$ are $\cF_{k+h-1}$--measurable. Hence for $h \geq 1$,
\begin{eqnarray*}
u_{k,h}^{(0)} & = & \dE[\theta_{k+h-1}\, \hdots\, \theta_{k+1}\, \theta_{k}^2\, \hdots\, \theta_{1}^2\, \dE[\theta_{k+h}\, \vert\, \cF_{k+h-1}]] \\
 & = & \theta\, u_{k,h-1}^{(0)} + \alpha\, u_{k,h-1}^{(1)}, \\
u_{k,h}^{(1)} & = & \dE[\theta_{k+h-1}\, \hdots\, \theta_{k+1}\, \theta_{k}^2\, \hdots\, \theta_{1}^2\, \dE[\eta_{k+h}\, \theta_{k+h}\, \vert\, \cF_{k+h-1}]] \\
 & = & \tau_2\, u_{k,h-1}^{(0)}, \\
u_{k,h}^{(2)} & = & \dE[\theta_{k+h-1}\, \hdots\, \theta_{k+1}\, \theta_{k}^2\, \hdots\, \theta_{1}^2\, \dE[\eta_{k+h}^2\, \theta_{k+h}\, \vert\, \cF_{k+h-1}]] \\
 & = & \theta\, \tau_2\, u_{k,h-1}^{(0)} + \alpha\, \tau_2\, u_{k,h-1}^{(1)}.
\end{eqnarray*}
We get the matrix formulation $U_{k,h} = N\, U_{k,h-1}$. It follows that, for $h \in \dN$,
\begin{equation}
\label{SystStatU1}
U_{k,h} = N^{h}\, U_{k,0}.
\end{equation}
The next step is to compute $U_{k,0}$, and we will use the same lines. For $k \geq 1$,
\begin{eqnarray*}
u_{k,0}^{(0)} & = & \dE[\theta_{k-1}^2\, \hdots\, \theta_{1}^2\, \dE[\theta_{k}^2\, \vert\, \cF_{k-1}]] \\
 & = & (\theta^2 + \tau_2)\, u_{k-1,0}^{(0)} + 2\, \alpha\, \theta\, u_{k-1,0}^{(1)} + \alpha^2\, u_{k-1,0}^{(2)}, \\
u_{k,0}^{(1)} & = & \dE[\theta_{k-1}^2\, \hdots\, \theta_{1}^2\, \dE[\eta_{k}\, \theta_{k}^2\, \vert\, \cF_{k-1}]] \\
 & = & 2\, \theta\, \tau_2\, u_{k-1,0}^{(0)} + 2\, \alpha\, \tau_2\, u_{k-1,0}^{(1)}, \\
u_{k,0}^{(2)} & = & \dE[\theta_{k-1}^2\, \hdots\, \theta_{1}^2\, \dE[\eta_{k}^2\, \theta_{k}^2\, \vert\, \cF_{k-1}]] \\
 & = & (\theta^2\, \tau_2 + \tau_4)\, u_{k-1,0}^{(0)} + 2\, \alpha\, \theta\, \tau_2\, u_{k-1,0}^{(1)} + \alpha^2\, \tau_2\, u_{k-1,0}^{(2)}.
\end{eqnarray*}
Thus, \eqref{SystStatU1} becomes
$$
U_{k,h} = N^{h}\, M^{k-1}\, U_{1,0}
$$
where the initial vector $U_{1,0}$ is given by
\begin{eqnarray*}
u_{1,0}^{(0)} & = & \dE[\theta_{1}^2] ~ = ~ (\theta^2 + \tau_2) + \alpha^2\, \tau_2, \\
u_{1,0}^{(1)} & = & \dE[\eta_{1}\, \theta_{1}^2] ~ = ~ 2\, \theta\, \tau_2, \\
u_{1,0}^{(2)} & = & \dE[\eta_{1}^2\, \theta_{1}^2] ~ = ~ (\theta^2\, \tau_2 + \tau_4) + \alpha^2\, \tau_2^2.
\end{eqnarray*}
It is then not hard to conclude that, for all $k \in \dN^{*}$ and $h \in \dN$,
$$
U_{k,h} = N^{h}\, M^{k}\, U_0.
$$
For $k=0$, a similar calculation based on the initial values $u_{0,h}^{(a)}$ for $a \in \{ 0,1,2 \}$ leads to $U_{0,h} = N^{h}\, U_0$, implying that \eqref{SystU2} holds for all $k, h \in \dN$.
\end{proof}

\begin{cor}
\label{CorMom2}
Assume that (H$_1$)--(H$_3$) hold. Then, the second-order properties of $(X_{t})$ are such that, for all $a \in \{ 0,1,2 \}$,
$$
\dE[\eta_{t}^{a}\, X_{t}^2] < \infty. 
$$
\end{cor}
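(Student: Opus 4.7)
The approach is to combine the almost sure causal representation from Theorem \ref{ThmCausal}, written compactly as $X_t = \sum_{k \geq 0} \veps_{t-k}\, \Pi_k$ with $\Pi_0 = 1$ and $\Pi_k = \prod_{\ell=0}^{k-1}(\theta + \alpha\, \eta_{t-\ell-1} + \eta_{t-\ell})$ for $k \geq 1$, with the recursive identities supplied by Lemma \ref{LemSeq2}. I would first treat the even cases $a \in \{0, 2\}$, where the integrand $\eta_t^a X_t^2$ is non-negative so that Tonelli's theorem legitimately gives
$$
\dE[\eta_t^a X_t^2] = \sum_{j,k \geq 0} \dE\bigl[\eta_t^a\, \veps_{t-j}\, \veps_{t-k}\, \Pi_j\, \Pi_k\bigr].
$$
The mutual independence between $(\veps_s)$ and $(\eta_s)$ from (H$_1$) factorizes each term as $\dE[\veps_{t-j} \veps_{t-k}]\, \dE[\eta_t^a \Pi_j \Pi_k]$, while the strong white noise property together with $\sigma_1 = 0$ from (H$_2$) cancels every off-diagonal contribution.

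What remains is the diagonal series $\dE[\eta_t^a X_t^2] = \sigma_2 \sum_{k \geq 0} \dE[\eta_t^a\, \Pi_k^2]$. By strict stationarity, each summand coincides with the quantity $u_{k,0}^{(a)}$ introduced in the proof of Lemma \ref{LemSeq2}, that is with $[M^k U_0]_{a+1}$ (the $k=0$ contribution being exactly $\tau_a = [U_0]_{a+1}$ under (H$_2$)). The hypothesis $\rho(M) < 1$ from (H$_3$) then forces convergence of the Neumann series $\sum_{k \geq 0} M^k = (I_3 - M)^{-1}$, yielding in closed form
$$
\dE[\eta_t^a X_t^2] = \sigma_2\, \bigl[ (I_3 - M)^{-1} U_0 \bigr]_{a+1} < \infty \qquad (a \in \{0, 2\}),
$$
which incidentally produces the value $\gamma_X(0)$ of Theorem \ref{ThmStat} in the case $a=0$.

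For the odd case $a = 1$, where $\eta_t^a$ is not sign-definite and Tonelli no longer applies to the signed series directly, I would sidestep the issue by the elementary bound $|\eta_t| \leq 1 + \eta_t^2$, which reduces the problem to the previous step via
$$
\dE[|\eta_t|\, X_t^2] \leq \dE[X_t^2] + \dE[\eta_t^2\, X_t^2] < \infty.
$$
The only technical subtlety in the whole argument is the exchange of the infinite sum with the expectation; it is cleanly handled by the non-negativity in the cases $a \in \{0, 2\}$ through Tonelli, after which absolute integrability in the $a = 1$ case follows at no cost. Everything then funnels through Lemma \ref{LemSeq2} and the spectral radius condition in (H$_3$), which is really the only piece doing essential work.
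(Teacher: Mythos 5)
Your argument is correct and follows essentially the same route as the paper: expand $X_t^2$ through the causal representation, let the strong white-noise property of $(\veps_t)$ kill the off-diagonal terms, identify the diagonal terms with $[M^k U_0]_{a+1}$ via Lemma \ref{LemSeq2} taken at $h=0$, and sum the Neumann series $\sum_{k\geq 0} M^k = (I_3-M)^{-1}$ under $\rho(M)<1$. One small caveat: non-negativity of $\eta_t^a X_t^2$ does not by itself license Tonelli on the signed double series (the individual summands are not non-negative for $j\neq k$), but this interchange is performed without comment in the paper as well and is easily repaired by dominating with the absolute series and Cauchy--Schwarz, using the geometric decay of $[M^k U_0]_{a+1}$.
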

\begin{proof}
For all $t \in \dZ$ and $k \geq 1$, denote by
\begin{equation}
\label{Mom2}
\Lambda_{t} = \begin{pmatrix}
1 \\ \eta_{t} \\ \eta_{t}^2
\end{pmatrix} \hspand P_{t,\,k} = \prod_{i=0}^{k-1} \theta_{t-i}
\end{equation}
with $P_{t,\,0} = 1$. Since $(\veps_{t})$ and $(\eta_{t})$ are uncorrelated white noises, using the causal representation \eqref{XCausal} and letting $h=0$,
$$
\dE[\Lambda_{t}\, X_{t}^2] = \sum_{k=0}^{\infty} \sum_{\ell=0}^{\infty} \dE[\Lambda_{t}\, P_{t,\,k}\, P_{t,\,\ell}\, \veps_{t-k}\, \veps_{t-\ell}] = \sigma_2\, \sum_{k=0}^{\infty} M^{k}\, U_0 = \sigma_2\, (I_3-M)^{-1}\, U_0
$$
as a consequence of the strict stationarity of $(\theta_{t})$. We remind that, under (H$_3$), it is well-known (see \textit{e.g.} \cite{HornJohnson85}) that $I_3-M$ is invertible and that
$$
\sum_{k=0}^{\infty} M^{k} = (I_3-M)^{-1}.
$$
\end{proof}

Let us return to the proof of Theorem \ref{ThmStat}. From Lemma \ref{LemSeq2} and Corollary \ref{CorMom2}, we are now able to evaluate the autocovariance function of $(X_{t})$. For $h \in \dN$,
$$
\cov(X_{t}, X_{t-h}) = \sum_{k=0}^{\infty} \sum_{\ell=0}^{\infty} \dE[P_{t,\,k}\, P_{t-h,\,\ell}\, \veps_{t-k}\, \veps_{t-h-\ell}].
$$
We get
$$
\gamma_{X}(h) = \sigma_2\, \sum_{k=0}^{\infty} \dE[P_{t,\,k+h}\, P_{t-h,\,k}] = \sigma_2\, \Big( \dE[P_{t,\,h}] + \sum_{k=1}^{\infty} u_{k,h}^{(0)} \Big) = \sigma_2\, \Big[ \sum_{k=0}^{\infty} U_{k,h} \Big]_{1}.
$$
From Lemma \ref{LemSeq2},
$$
\gamma_{X}(h) = \sigma_2\, \big[ N^{h}\, (I_3-M)^{-1}\, U_0 \big]_1.
$$
We conclude using the fact that $\gamma_{X}$ does not depend on $t$. For all $t \in \dZ$ and $h \in \dN$, $\gamma_{X}(h) = \cov(X_{t-h}, X_{t}) = \cov(X_{t}, X_{t+h})$, which shows that the above reasoning still holds for $h \in \dZ$, replacing $h$ by $\vert h \vert$. Now suppose that $(W_{t})$ is another causal ergodic strictly and second-order stationary solution. There exists $\varphi$ independent of $t$ such that for all $t \in \dZ$,
$$
X_{t} - W_{t} = \varphi((\veps_{t}, \eta_{t}), (\veps_{t-1}, \eta_{t-1}), \hdots)
$$
and necessarily, $(X_{t}-W_{t})$ is also a strictly stationary process having second-order moments. Let $e^{(a)} = \dE[\eta_{t}^{a}\, (X_{t}-W_{t})^2]$, for $a \in \{ 0,1,2 \}$. From the same calculations and exploiting the second-order stationarity of $(X_{t}-W_{t})$, it follows that
\begin{equation*}
\begin{pmatrix}
e^{(0)} \\
e^{(1)} \\
e^{(2)}
\end{pmatrix} = M \begin{pmatrix}
e^{(0)} \\
e^{(1)} \\
e^{(2)}
\end{pmatrix}
\end{equation*}
implying, if $(e^{(0)} ~ e^{(1)} ~ e^{(2)}) \neq 0$, that $1$ is an eigenvalue of $M$. Clearly, this contradicts $\rho(M) < 1$ which is part of (H$_3$). Thus, $\dE[(X_{t}-W_{t})^2]$ must be zero and $X_{t} = W_{t}$ a.s.

$\hfill\qed$

\subsection{Proof of Theorem \ref{ThmEmpMean}}
\label{SecProofThmEmpMean}
The convergence to zero is only the application of the ergodic theorem, since we have seen in \eqref{StatEsp} that $\dE[X_{t}] = 0$. Here, only (H$_1$) and (H$_2$) are needed. We make the following notations,
\begin{eqnarray*}
\bar{M}_{n}^{(1)} & = & \sum_{t=1}^{n} X_{t-1}\, \big( (1 + \alpha\, \theta)\, \eta_{t} + \alpha\, (\eta_{t}^2 - \tau_2) \big), \\
\bar{M}_{n}^{(2)} & = & \alpha^2 \sum_{t=1}^{n} \eta_{t-1}\, X_{t-1}\, \eta_{t}, \\
\bar{M}_{n}^{(3)} & = & \sum_{t=1}^{n} (1 + \alpha\, \eta_{t}) \, \veps_{t}.
\end{eqnarray*}
Consider the filtration $(\cF^{\, *}_{n})$ generated by $\cF^{\, *}_0 = \sigma(X_0, \eta_0)$ and, for $n \geq 1$, by
\begin{equation}
\label{FiltrInf}
\cF^{\, *}_{n} = \sigma( X_0, \eta_0, (\veps_1, \eta_1), \hdots, (\veps_{n}, \eta_{n}) )
\end{equation}
and let
\begin{equation}
\label{VecMartEmpMean}
\bar{\cM}_{n} = \begin{pmatrix}
\bar{M}_{n}^{(1)} \\
\bar{M}_{n}^{(2)} \\
\bar{M}_{n}^{(3)}
\end{pmatrix}.
\end{equation}
Under our hypotheses, $\bar{\cM}_{n}$ is a locally square-integrable real vector $(\cF^{\, *}_{n})$--martingale. We shall make use of the central limit theorem for vector martingales given \textit{e.g.} by Cor. 2.1.10 of \cite{Duflo97}. On the one hand, we have to study the asymptotic behavior of the predictable
quadratic variation of $\bar{\cM}_{n}$. For all $n \geq 1$, let
\begin{equation}
\label{MartQuadEmpMean}
\langle \bar{\cM} \rangle_{n} = \sum_{t=1}^{n} \dE\big[ (\Delta \bar{\cM}_{t} )(\Delta \bar{\cM}_{t} )^{T}\, \vert\, \cF^{\, *}_{t-1} \big],
\end{equation}
with $\Delta \bar{\cM}_1 = \bar{\cM}_1$. To simplify the calculations, we introduce some more notations. The second-order moments of the process are called
\begin{equation}
\label{Lam}
\dE[\Lambda_{t}\, X_{t}^2] = \begin{pmatrix}
\lambda_{0} \\
\lambda_{1} \\
\lambda_{2}
\end{pmatrix} = \Lambda
\end{equation}
where $\Lambda_{t}$ is given in \eqref{Mom2}, with $\lambda_0 = \gamma_{X}(0)$. We use the strict stationarity to establish, following Corollary \ref{CorMom2} under the additional (H$_3$) hypothesis, that
\begin{equation}
\label{LamExpl}
\Lambda = \sigma_2\, (I_3-M)^{-1}\, U_0
\end{equation}
and ergodicity immediately leads to
\begin{equation}
\label{CvgEmpMom2}
\frac{1}{n} \sum_{t=1}^{n} \Lambda_{t}\, X_{t}^2 \cvgas \Lambda.
\end{equation}
Now, we are going to study the asymptotic behavior of $\langle \bar{\cM} \rangle_{n}/n$. First, under our assumptions,
$$
\langle \bar{M}^{(1)}, \bar{M}^{(3)} \rangle_{n} = \langle \bar{M}^{(2)}, \bar{M}^{(3)} \rangle_{n} = 0.
$$
Since the other calculations are very similar we only detail the first one,
\begin{eqnarray*}
\langle \bar{M}^{(1)} \rangle_{n} & = & \sum_{t=1}^{n} X_{t-1}^{\, 2}\, \dE\big[ \big( (1 + \alpha\, \theta)\, \eta_{t} + \alpha\, (\eta_{t}^2 - \tau_2) \big)^2 \big] \\
& = & \big( (1 + \alpha\, \theta)^2\, \tau_2 + \alpha^2\, (\tau_4 - \tau_2^2) \big) \sum_{t=1}^{n} X_{t-1}^{\, 2}.
\end{eqnarray*}
We obtain using $\bar{K}$ in \eqref{Kb} that
\begin{equation}
\label{MartQuadExplEmpMean}
\langle \bar{\cM} \rangle_{n} = \bar{K} \circ \sum_{t=1}^{n} \begin{pmatrix}
X_{t}^2 & \eta_{t}\, X_{t}^2 & 0 \\
\eta_{t}\, X_{t}^2 & \eta_{t}^2\, X_{t}^2 & 0\\
0 & 0 & 1
\end{pmatrix} + \bar{R}_{n}
\end{equation}
where the Hadamard product $\circ$ is used to lighten the formula, and where the remainder $\bar{R}_{n}$ is made of isolated terms such that, from \eqref{CvgEmpMom2},
\begin{equation}
\label{MartQuadCvgRemEmpMean}
\frac{\bar{R}_{n}}{n} \cvgas 0.
\end{equation}
We reach these results by computing $\langle \bar{M}^{(i)}, \bar{M}^{(j)} \rangle_{n}$ for $i,j \in \{ 1, 2, 3 \}$ just as we have done above for some of them, and then by normalizing each sum, leaving the isolated terms in the remainder. For example,
\begin{equation*}
\sum_{t=1}^{n} X_{t-1}^{\, 2} = \sum_{t=1}^{n} X_{t}^{\, 2} + (X_0^2 - X_{n}^{\, 2}).
\end{equation*}
It is then a direct application of the ergodic theorem that gives the $O(n)$ behavior of the sums (and the $o(n)$ behavior of the isolated terms as a consequence), and that enables to identify, by combining \eqref{CvgEmpMom2}, \eqref{MartQuadExplEmpMean} and \eqref{MartQuadCvgRemEmpMean}, the limiting value
\begin{equation}
\label{MartQuadCvgEmpMean}
\frac{\langle \bar{\cM} \rangle_{n}}{n} \cvgas \bar{K} \circ \bar{\Gamma}
\end{equation}
where $\bar{\Gamma}$ is given by
\begin{equation}
\label{Gamb}
\bar{\Gamma} = \begin{pmatrix}
\lambda_0 & \lambda_1 & 0 \\
\lambda_1 & \lambda_2 & 0 \\
0 & 0 & 1
\end{pmatrix}.
\end{equation}
On the other hand, it is necessary to prove that the Lindeberg's condition is satisfied, namely that for all $\veps > 0$,
\begin{equation}
\label{MartLindEmpMean}
\frac{1}{n} \sum_{t=1}^{n} \dE\big[ \Vert \Delta \bar{\cM}_{t} \Vert^2\, \dI_{ \{ \Vert \Delta \bar{\cM}_{t} \Vert\, \geq\, \veps\, \sqrt{n} \}}\, \vert\, \cF^{\, *}_{t-1} \big] \cvgp 0
\end{equation}
as $n$ tends to infinity. By ergodicity and strict stationarity of the increments $(\Delta \bar{\cM}_{t})$ under the assumption on $X_0$, it follows that for any $M > 0$,
$$
\frac{1}{n} \sum_{t=1}^{n} \dE\big[ \Vert \Delta \bar{\cM}_{t} \Vert^2\, \dI_{ \{ \Vert \Delta \bar{\cM}_{t} \Vert\, \geq\, M \}}\, \vert\, \cF_{t-1}^{\, *} \big] \cvgas \dE\big[ \Vert \Delta \bar{\cM}_1 \Vert^2\, \dI_{ \{ \Vert \Delta \bar{\cM}_1 \Vert\, \geq\, M \}} \big].
$$
Corollary \ref{CorMom2} implies that $\dE[\Vert \Delta \bar{\cM}_1 \Vert^2] < \infty$ and the right-hand side can be made arbitrarily small, which establishes the Lindeberg's condition. From \eqref{MartQuadCvgEmpMean} and \eqref{MartLindEmpMean}, we deduce that
\begin{equation}
\label{MartCvgEmpMean}
\frac{\bar{\cM}_{n}}{\sqrt{n}} \cvgl \cN(0, \bar{K} \circ \bar{\Gamma})
\end{equation}
which is nothing but the central limit theorem for vector martingales, as intended. One can notice that the above reasoning is in fact a vector extension of the main result of \cite{Billingsley61}, related to the central limit theorem for martingales having ergodic and stationary increments. Finally, by a tedious but straightforward calculation, one can obtain that
$$
\sqrt{n}\, \bar{X}_{n} = \frac{\Omega_{3}^{\, T} \bar{\cM}_{n} + \bar{r}_{n}}{(1 - \theta - \alpha\, \tau_2)\, \sqrt{n}}
$$
where $\Omega_{3}^{\, T} = (1 ~ 1 ~ 1)$ and $\bar{r}_{n} = o(\sqrt{n})$ a.s. from \eqref{CvgEmpMom2}. It remains to apply Slutsky's lemma to conclude that
$$
\sqrt{n}\, \bar{X}_{n} \cvgl \cN(0, \kappa^2)
$$
with
\begin{equation}
\label{VarEmpMean}
\kappa^2 = \frac{\Omega_{3}^{\, T} (\bar{K} \circ \bar{\Gamma})\, \Omega_{3}}{(1 - \theta - \alpha\, \tau_2)^2}
\end{equation}
using the whole notations above.
$\hfill\qed$

\subsection{Proof of Theorem \ref{ThmOLS}}
\label{SecProofThmOLS}
The almost sure convergence essentially relies on the ergodicity of the process. Theorem \ref{ThmStat} together with the ergodic theorem directly lead to
$$
\wh{\theta}_{n} \cvgas \frac{\gamma_{X}(1)}{\gamma_{X}(0)} = \frac{\big[ N\, (I_3-M)^{-1}\, U_0 \big]_1}{\big[ (I_3-M)^{-1}\, U_0 \big]_1}
$$
as $n$ tends to infinity, but we are interested in the explicit form of the limiting value. From the combined expressions \eqref{XAR}--\eqref{MACoef},  it follows that
\begin{equation}
\label{CvgDecomp}
\sum_{t=1}^{n} X_{t-1} X_{t} = \theta\, \sum_{t=1}^{n} X_{t-1}^{\, 2} + \alpha\, \sum_{t=1}^{n} \eta_{t-1}\, X_{t-1}^{\, 2} + \sum_{t=1}^{n} X_{t-1}^{\, 2}\, \eta_{t} + \sum_{t=1}^{n} X_{t-1}\, \veps_{t}.
\end{equation}
We also note from Corollary \ref{CorMom2} that, for all $t \in \dZ$,
\begin{eqnarray*}
\dE[\eta_{t}\, X_{t}^2] & = & \dE[\theta_{t}^2\, X_{t-1}^{\, 2}\, \eta_{t}] + \dE[\veps_{t}^2\, \eta_{t}] + 2\, \dE[\theta_{t}\, X_{t-1}\, \veps_{t}\, \eta_{t}] \\
 & = & 2\, \alpha\, \tau_2\, \dE[\eta_{t-1}\, X_{t-1}^{\, 2}] + 2\, \theta\, \tau_2\, \dE[X_{t-1}^{\, 2}].
\end{eqnarray*}
Thus, by stationarity and ergodicity,
\begin{equation}
\label{CvgT1}
\frac{1}{n} \sum_{t=1}^{n} \eta_{t-1}\, X_{t-1}^{\, 2} \cvgas \frac{2\, \theta\, \tau_2\, \gamma_{X}(0)}{1 - 2\, \alpha\, \tau_2}.
\end{equation}
Similarly, $\dE[X_{t-1}^{\, 2}\, \eta_{t}] = \dE[X_{t-1}\, \veps_{t}] = 0$ and from the ergodic theorem,
\begin{equation}
\label{CvgT2}
\frac{1}{n} \sum_{t=1}^{n} X_{t-1}^{\, 2} \cvgas \gamma_{X}(0), \hsp \frac{1}{n} \sum_{t=1}^{n} X_{t-1}^{\, 2}\, \eta_{t} \cvgas 0, \hsp \frac{1}{n} \sum_{t=1}^{n} X_{t-1}\, \veps_{t} \cvgas 0.
\end{equation}
The expression of $\wh{\theta}_{n}$ in \eqref{EstOLS} combined with the decomposition \eqref{CvgDecomp} and the convergences \eqref{CvgT1} and \eqref{CvgT2} give
$$
\wh{\theta}_{n} \cvgas \theta + \frac{2\, \alpha\, \theta\, \tau_2}{1 - 2\, \alpha\, \tau_2} = \frac{\theta}{1 - 2\, \alpha\, \tau_2}.
$$
Let us now establish the asymptotic normality. First, we have to study the fourth-order properties of $(X_{t})$ and some other technical lemmas are needed. For all $k \in \dN^{*}$, consider the sequences
\begin{eqnarray*}
v_{k}^{(a)} & = & \dE[\eta_{k}^{a}\, \theta_{k}^4\, \hdots\, \theta_{1}^4]
\end{eqnarray*}
where $a \in \{ 0, \hdots, 4 \}$, and build
\begin{equation}
\label{V}
V_{k} = \begin{pmatrix}
v_{k}^{(0)} \\
\vdots \\
v_{k}^{(4)}
\end{pmatrix}.
\end{equation}
For the following calculations, $H$ is defined in \eqref{H} and $\{ V_0, \hdots, V_4 \}$ in \eqref{FamVecMom4}.
\begin{lem}
\label{LemSeq4}
Assume that (H$_1$)--(H$_4$) hold. Then, for all $k \in \dN$,
\begin{equation}
\label{SystV4}
V_{k} = H^{k}\, V_0.
\end{equation}
\end{lem}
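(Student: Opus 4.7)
The plan is to mirror the proof of Lemma \ref{LemSeq2}: establish the one-step matrix recursion $V_{k} = H\, V_{k-1}$ for $k \geq 1$ by conditioning on the filtration $\cF_{k-1}$ defined in \eqref{Filtr}, and then iterate. With the empty-product convention, $v_{0}^{(a)} = \dE[\eta_{0}^{\, a}]$ under (H$_2$), which indeed matches the entries of $V_0$ in \eqref{FamVecMom4}, so the base case is free.

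Fix $k \geq 1$ and set $\mu_{k} = \theta + \alpha\, \eta_{k-1}$, so that $\theta_{k} = \mu_{k} + \eta_{k}$. Since $\theta_{k-1}, \ldots, \theta_1$ are $\cF_{k-1}$--measurable and $\eta_{k}$ is independent of $\cF_{k-1}$, for each $a \in \{0, \ldots, 4\}$ I would first compute
$$
\dE\!\big[ \eta_{k}^{\, a}\, \theta_{k}^{\, 4}\, \vert\, \cF_{k-1} \big] = \dE\!\big[ \eta_{k}^{\, a}\, (\mu_{k} + \eta_{k})^4\, \vert\, \cF_{k-1} \big].
$$
By the binomial theorem together with (H$_2$), this conditional expectation is a polynomial in $\mu_{k}$ whose coefficients are read directly from Table \ref{TabExp} (column $b = 4$, row $a$, with $\mu_k$ substituted for $\theta$). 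Expanding $\mu_{k}^{\, j}$ again by the binomial theorem then yields a polynomial in $\eta_{k-1}$ of degree at most four. Multiplying by the $\cF_{k-1}$-measurable factor $\theta_{k-1}^{\, 4} \cdots \theta_{1}^{\, 4}$ and taking expectation converts each monomial $\eta_{k-1}^{\, j}$ into $v_{k-1}^{(j)}$, producing the scalar recursion $v_{k}^{(a)} = \sum_{j=0}^{4} H_{a+1,\, j+1}\, v_{k-1}^{(j)}$.

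The main obstacle is purely combinatorial bookkeeping: one must verify that the five coefficient lists generated by this double binomial expansion (first in $\eta_{k}$, then in $\eta_{k-1}$) match exactly the five rows of $H$ displayed after \eqref{H}. The columnwise description \eqref{H} is well suited for this check, because $H_{j+1}$ is constructed to gather precisely the contributions of order $\alpha^{j}$ produced by the monomial $\binom{4}{j}\, \theta^{4-j}\, \alpha^{j}\, \eta_{k-1}^{\, j}$ arising in $\mu_k^{\, j}$. All expectations involved are finite thanks to $\tau_8 < \infty$ in (H$_4$), and (H$_2$) suppresses the odd-order moments that would otherwise pollute the coefficients. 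Once the one-step recursion is in hand, the lemma follows by an immediate induction on $k$.
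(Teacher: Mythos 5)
Your proposal is correct and follows essentially the same route as the paper: condition on $\cF_{k-1}$, read the conditional moments of $\eta_k$ against powers of $\theta_k$ from Table \ref{TabExp}, expand in $\eta_{k-1}$ to obtain the one-step recursion $V_k = H\, V_{k-1}$, and iterate (the paper writes out the five scalar recursions and checks $V_1 = H\, V_0$ explicitly, whereas you absorb the base case into the empty-product convention, which is equivalent). The only cosmetic difference is the order of the two binomial expansions — integrating $\eta_k$ first organizes the coefficients by rows of $H$, while expanding in $\alpha\,\eta_{k-1}$ first reproduces the columnwise description \eqref{H} directly — but both yield the same matrix.
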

\begin{proof}
With the filtration $(\cF_{t})$ defined in \eqref{Filtr}, for $k \geq 1$,
\begin{eqnarray*}
v_{k}^{(0)} & = & \dE[\theta_{k-1}^4\, \hdots\, \theta_{1}^4\, \dE[\theta_{k}^4\, \vert\, \cF_{k-1}]] \\
 & = & (\theta^4 + 6\, \theta^2\, \tau_2 + \tau_4)\, v_{k-1}^{(0)} + 4\, \alpha\,(\theta^3 + 3\, \theta \,\tau_2)\, v_{k-1}^{(1)} + 6\, \alpha^2\,(\theta^2 + \tau_2)\, v_{k-1}^{(2)} \\
 & & \hsp \hsp \hsp + ~ 4\, \alpha^3\, \theta\, v_{k-1}^{(3)} + \alpha^4\, v_{k-1}^{(4)}, \\
v_{k}^{(1)} & = & \dE[\theta_{k-1}^4\, \hdots\, \theta_{1}^4\, \dE[\eta_{k}\, \theta_{k}^4\, \vert\, \cF_{k-1}]] \\
 & = & (4\, \theta^3\, \tau_2 + 4\, \theta\, \tau_4)\, v_{k-1}^{(0)} + 4\, \alpha\,(3\, \theta^2\, \tau_2 + \tau_4)\, v_{k-1}^{(1)} + 12\, \alpha^2\, \theta\, \tau_2\, v_{k-1}^{(2)} \\
 & & \hsp \hsp \hsp + ~ 4\, \alpha^3\, \tau_2\, v_{k-1}^{(3)}, \\
v_{k}^{(2)} & = & \dE[\theta_{k-1}^4\, \hdots\, \theta_{1}^4\, \dE[\eta_{k}^2\, \theta_{k}^4\, \vert\, \cF_{k-1}]] \\
 & = & (\theta^4\, \tau_2 + 6\, \theta^2\, \tau_4 + \tau_6)\, v_{k-1}^{(0)} + 4\, \alpha\,(\theta^3\, \tau_2 + 3\, \theta\, \tau_4) \, v_{k-1}^{(1)} + 6\, \alpha^2\, (\theta^2\, \tau_2 + \tau_4)\, v_{k-1}^{(2)} \\
 & & \hsp \hsp \hsp + ~ 4\, \alpha^3\, \theta\, \tau_2\, v_{k-1}^{(3)} + \alpha^4\, \tau_2\, v_{k-1}^{(4)}, \\
v_{k}^{(3)} & = & \dE[\theta_{k-1}^4\, \hdots\, \theta_{1}^4\, \dE[\eta_{k}^3\, \theta_{k}^4\, \vert\, \cF_{k-1}]] \\
 & = & (4\, \theta^3\, \tau_4 + 4\, \theta\, \tau_6)\, v_{k-1}^{(0)} + 4\, \alpha\,(3\, \theta^2\, \tau_4 + \tau_6)\, v_{k-1}^{(1)} + 12\, \alpha^2\, \theta\, \tau_4\, v_{k-1}^{(2)} \\
 & & \hsp \hsp \hsp + ~ 4\, \alpha^3\, \tau_4\, v_{k-1}^{(3)}, \\
v_{k}^{(4)} & = & \dE[\theta_{k-1}^4\, \hdots\, \theta_{1}^4\, \dE[\eta_{k}^4\, \theta_{k}^4\, \vert\, \cF_{k-1}]] \\
 & = & (\theta^4\, \tau_4 + 6\, \theta^2\, \tau_6 + \tau_8)\, v_{k-1}^{(0)} + 4\, \alpha\,(\theta^3\, \tau_4 + 3\, \theta\, \tau_6) \, v_{k-1}^{(1)} + 6\, \alpha^2\, (\theta^2\, \tau_4 + \tau_6)\, v_{k-1}^{(2)} \\
 & & \hsp \hsp \hsp + ~ 4\, \alpha^3\, \theta\, \tau_4\, v_{k-1}^{(3)} + \alpha^4\, \tau_4\, v_{k-1}^{(4)},
\end{eqnarray*}
where Table \ref{TabExp} may be read to get the coefficients appearing in the calculations. We reach the matrix formulation $V_{k} = H\, V_{k-1}$ and the initial value $V_1$ is obtained \textit{via}
\begin{eqnarray*}
v_{1}^{(0)} & = & \dE[\theta_{1}^4] ~ = ~ (\theta^4 + 6\, \theta^2\, \tau_2 + \tau_4) + 6\, \alpha^2\, \tau_2\, (\theta^2 + \tau_2) + \alpha^4\, \tau_4, \\
v_{1}^{(1)} & = & \dE[\eta_{1}\, \theta_{1}^4] ~ = ~ (4\, \theta^3\, \tau_2 + 4\, \theta\, \tau_4) + 12\, \alpha^2\, \theta\, \tau_2^2, \\
v_{1}^{(2)} & = & \dE[\eta_{1}^2\, \theta_{1}^4] ~ = ~ (\theta^4\, \tau_2 + 6\, \theta^2\, \tau_4 + \tau_6) + 6\, \alpha^2\, \tau_2\, (\theta^2\, \tau_2 + \tau_4) + \alpha^4\, \tau_2\, \tau_4, \\
v_{1}^{(3)} & = & \dE[\eta_{1}^3\, \theta_{1}^4] ~ = ~ (4\, \theta^3\, \tau_4 + 4\, \theta\, \tau_6) + 12\, \alpha^2\, \theta\, \tau_2\, \tau_4, \\
v_{1}^{(4)} & = & \dE[\eta_{1}^4\, \theta_{1}^4] ~ = ~ (\theta^4\, \tau_4 + 6\, \theta^2\, \tau_6 + \tau_8) + 6\, \alpha^2\, \tau_2\, (\theta^2\, \tau_4 + \tau_6) + \alpha^4\, \tau_4^2.
\end{eqnarray*}
Hence, $V_1 = H\, V_0$.
\end{proof}

Now for all $1 \leq k < \ell$, consider the sequence
\begin{eqnarray*}
w_{\ell,k}^{(a)} & = & \dE[\eta_{\ell}^{a}\, \theta_{\ell}^4\, \hdots\, \theta_{\ell-k+1}^4\, \theta_{\ell-k}^2\, \hdots\, \theta_1^2]
\end{eqnarray*}
where $a \in \{ 0, \hdots, 4 \}$, then build
$$
W_{\ell,k} = \begin{pmatrix} w_{\ell,k}^{(0)} \\ \vdots \\ w_{\ell,k}^{(4)} \end{pmatrix} \hspand G = \begin{pmatrix}
\theta^2 + \tau_2 & 2\, \alpha\, \theta & \alpha^2 & 0 & 0 \\
2\, \theta\, \tau_2 & 2\, \alpha\, \tau_2 & 0 & 0 & 0 \\
\theta^2\, \tau_2 + \tau_4 & 2\, \alpha\, \theta\, \tau_2 & \alpha^2\, \tau_2 & 0 & 0 \\
2\, \theta\, \tau_4 & 2\, \alpha\, \tau_4 & 0 & 0 & 0 \\
\theta^2\, \tau_4 + \tau_6 & 2\, \alpha\, \theta\, \tau_4 & \alpha^2\, \tau_4 & 0 & 0 \\
\end{pmatrix}.
$$
Once again, note that $G$ can be expressed directly from $\{ V_0, \hdots, V_4 \}$,
\begin{equation}
\label{G}
\left\{
\begin{array}{l}
G_1 = \theta^2\, V_0 + 2\, \theta\, V_1 + V_2 \\
G_2 = 2\, \alpha\, ( \theta\, V_0 + V_1) \\
G_3 = \alpha^2\, V_0 \\
G_4 = 0 \\
G_5 = 0.
\end{array}
\right.
\end{equation}
Observe also that the upper left-hand $3 \times 3$ submatrix of $G$ is precisely $M$ given by \eqref{M}. This argument will be used thereafter to establish that $\rho(G) < 1$.
\begin{lem}
\label{LemSeq22}
Assume that (H$_1$)--(H$_4$) hold. Then, for all $1 \leq k < \ell$,
\begin{equation}
\label{SystV4}
W_{\ell,k} = H^{k}\, G^{\ell-k}\, V_0.
\end{equation}
\end{lem}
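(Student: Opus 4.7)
The plan is to peel factors off the top of the product one at a time, conditioning on the filtration $(\cF_t)$, exactly as in the proofs of Lemmas \ref{LemSeq2} and \ref{LemSeq4}. The key bookkeeping device is a five-component extension of $U_{m, 0}$, namely the vector $\wt{W}_{m, 0}$ with entries $\wt{w}_{m, 0}^{(a)} = \dE[\eta_m^a\, \theta_m^2 \cdots \theta_1^2]$ for $a \in \{0, \ldots, 4\}$. This extension is necessary because peeling a fourth-power factor loads contributions from $\eta_{m-1}^a$ up to $a = 4$, so the whole recursion must be carried out in the five-dimensional framework underlying $H$ and $G$. The moment condition $\tau_8 < \infty$ from (H$_4$) ensures that all expectations below are finite.

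First, for $1 \leq k \leq \ell - 1$ and every $a \in \{0, \ldots, 4\}$, I would establish the recursion $W_{\ell, k} = H\, W_{\ell-1, k-1}$, with the convention $W_{m, 0} := \wt{W}_{m, 0}$. Setting $c = \theta + \alpha\, \eta_{\ell-1}$ and conditioning on $\cF_{\ell-1}$, the inner expectation $\dE[\eta_\ell^a\, \theta_\ell^4\, \vert\, \cF_{\ell-1}] = \dE[\eta_\ell^a\, (c + \eta_\ell)^4\, \vert\, \cF_{\ell-1}]$ is evaluated using Table \ref{TabExp} with $\theta$ replaced by $c$ and then expanded as a polynomial in $\eta_{\ell-1}$ of degree at most four. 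A direct computation shows that the coefficients of $\eta_{\ell-1}^0, \ldots, \eta_{\ell-1}^4$ are exactly the entries of the $(a+1)$-th row of $H$, the argument being identical to the one giving $V_k = H V_{k-1}$ in Lemma \ref{LemSeq4}. Multiplying back by $\theta_{\ell-1}^4 \cdots \theta_{\ell-k+1}^4\, \theta_{\ell-k}^2 \cdots \theta_1^2$ and taking expectation yields the matrix recursion; iterating it $k$ times (the convention takes care of the last step where $k-1 = 0$) leads to $W_{\ell, k} = H^k\, \wt{W}_{\ell-k, 0}$.

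Next, I would establish $\wt{W}_{m, 0} = G\, \wt{W}_{m-1, 0}$ for $m \geq 1$ by an analogous conditioning: the inner expectation $\dE[\eta_m^a\, \theta_m^2\, \vert\, \cF_{m-1}] = \dE[\eta_m^a\, (c + \eta_m)^2\, \vert\, \cF_{m-1}]$ is a polynomial in $\eta_{m-1}$ of degree at most two, so only the first three components of $\wt{W}_{m-1, 0}$ can contribute. A row-by-row check using Table \ref{TabExp} with $b = 2$ shows that the coefficients coincide with the corresponding row of $G$, which explains simultaneously the vanishing of its last two columns and the fact that its upper-left $3 \times 3$ block coincides with the matrix $M$ of Lemma \ref{LemSeq2}. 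The base case $\wt{W}_{0, 0} = V_0$ follows at once from (H$_2$), since the empty product equals one and $\dE[\eta_0^a]$ then gives precisely the entries of $V_0$. Induction on $m$ yields $\wt{W}_{\ell-k, 0} = G^{\ell-k} V_0$, and substituting into the previous display gives the claimed identity $W_{\ell, k} = H^k G^{\ell-k} V_0$. The main obstacle is essentially combinatorial: one has to verify that the two recursions interlock consistently through $\wt{W}_{\ell-k, 0}$ and that the five-dimensional framework is preserved throughout, even though only the first three components are actually propagated by the $G$-recursion itself.
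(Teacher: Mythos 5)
Your proof is correct and takes essentially the same route as the paper's: the paper likewise reduces to $W_{\ell,k}=H^{k}\,U_{\ell-k}$ (your $\wt{W}_{\ell-k,0}$ is exactly its five-component vector $U_{\ell-k}$) and then establishes $U_{m}=G\,U_{m-1}$ with $U_{0}=V_{0}$, computing only the two new rows $a=3,4$ since the first three reproduce $M$. Your write-up merely makes the $H$-peeling recursion $W_{\ell,k}=H\,W_{\ell-1,k-1}$ explicit, whereas the paper refers back to the calculations of Lemmas \ref{LemSeq2} and \ref{LemSeq4} for that step.
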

\begin{proof}
The calculations are precisely the same as in the proof of Lemmas \ref{LemSeq2} and \ref{LemSeq4}. Indeed,
$$
W_{\ell,k} = H^{k}\, U_{\ell-k}
$$
where we extend the definition of $U_{k,h}$ in \eqref{Ukh} to $a \in \{ 0, \hdots, 4 \}$, namely
$$
U_{k} = \begin{pmatrix}
u_{k}^{(0)} \\
\vdots \\
u_{k}^{(4)}
\end{pmatrix} = \begin{pmatrix}
u_{k,0}^{(0)} \\
\vdots \\
u_{k,0}^{(4)}
\end{pmatrix}= U_{k,0}.
$$
Then it just remains to investigate the behavior of $u_{\ell-k}$ for $a=3$ and $a=4$ using Table \ref{TabExp},
\begin{eqnarray*}
u_{\ell-k}^{(3)} & = & \dE[\theta_{\ell-k-1}^2\, \hdots\, \theta_{1}^2\, \dE[\eta_{\ell-k}^3\, \theta_{\ell-k}^2\, \vert\, \cF_{\ell-k-1}]] \\
 & = & 2\, \theta\, \tau_4\, u_{\ell-k-1}^{(0)} + 2\, \alpha\, \tau_4\, u_{\ell-k-1}^{(1)}, \\
u_{\ell-k}^{(4)} & = & \dE[\theta_{\ell-k-1}^2\, \hdots\, \theta_{1}^2\, \dE[\eta_{\ell-k}^4\, \theta_{\ell-k}^2\, \vert\, \cF_{\ell-k-1}]] \\
 & = & (\theta^2\, \tau_4 + \tau_6)\, u_{\ell-k-1}^{(0)} + 2\, \alpha\, \theta\, \tau_4\, u_{\ell-k-1}^{(1)} + \alpha^2\, \tau_4\, u_{\ell-k-1}^{(2)}.
\end{eqnarray*}
Hence, $U_{\ell-k} = G\, U_{\ell-k-1}$. It is not hard to conclude that, for all $1 \leq k < \ell$,
$$
U_{\ell-k} = G^{\ell-k}\, V_0.
$$
\end{proof}
\begin{cor}
\label{CorMom4}
Assume that (H$_1$)--(H$_4$) hold. Then, the fourth-order properties of $(X_{t})$ are such that, for all $a \in \{ 0, \hdots, 4 \}$,
$$
\dE[\eta_{t}^{a}\, X_{t}^4] < \infty. 
$$
\end{cor}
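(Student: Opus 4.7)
My plan mirrors the second-order proof of Corollary \ref{CorMom2}: I start from the causal representation \eqref{XCausal}, exploit the vanishing odd moments from (H$_2$) to reduce the quadruple sum to two surviving types, then identify each piece with iterates of $H$ and $G$ via Lemmas \ref{LemSeq4} and \ref{LemSeq22}, and finally close the argument with the spectral-radius bounds supplied by (H$_3$) and (H$_4$).

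First I would write $X_t = \sum_{k \geq 0} \veps_{t-k}\, P_{t,k}$ (with the convention $P_{t,0} = 1$) and, using the mutual independence of $(\veps_t)$ and $(\eta_t)$, expand
$$
\dE[\eta_t^a X_t^4] = \sum_{k_1,\ldots,k_4 \geq 0} \dE[\veps_{t-k_1} \veps_{t-k_2} \veps_{t-k_3} \veps_{t-k_4}]\, \dE[\eta_t^a\, P_{t,k_1} P_{t,k_2} P_{t,k_3} P_{t,k_4}].
$$
Under (H$_2$) the $\veps$-factor vanishes unless the four indices are all equal or split into two disjoint pairs of repeated indices, leaving
$$
\dE[\eta_t^a X_t^4] = \sigma_4 \sum_{k \geq 0} \dE[\eta_t^a P_{t,k}^4] + 6\, \sigma_2^2 \sum_{0 \leq k < \ell} \dE[\eta_t^a P_{t,k}^2 P_{t,\ell}^2],
$$
where the multiplicity $6 = \binom{4}{2}$ counts the ordered four-tuples that realise each unordered pair $\{k,\ell\}$.

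Strict stationarity then gives $\dE[\eta_t^a P_{t,k}^4] = v_k^{(a)} = [H^k V_0]_{a+1}$ by Lemma \ref{LemSeq4}. For $k < \ell$ one has $P_{t,k}^2 P_{t,\ell}^2 = (\theta_t \cdots \theta_{t-k+1})^4 (\theta_{t-k} \cdots \theta_{t-\ell+1})^2$, so $\dE[\eta_t^a P_{t,k}^2 P_{t,\ell}^2] = w_{\ell,k}^{(a)} = [H^k G^{\ell-k} V_0]_{a+1}$ by Lemma \ref{LemSeq22}. Summability requires $\rho(H) < 1$ (granted by (H$_4$)) and $\rho(G) < 1$: since the fourth and fifth columns of $G$ vanish while its upper-left $3 \times 3$ block coincides with $M$, expanding the characteristic polynomial along those two columns yields $\det(\lambda I_5 - G) = \lambda^2 \det(\lambda I_3 - M)$, so the spectrum of $G$ is $\{0,0\}$ together with the eigenvalues of $M$, and (H$_3$) delivers $\rho(G) < 1$. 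Collapsing the two geometric series then produces the closed form
$$
\dE[\eta_t^a X_t^4] = \sigma_4\, \bigl[(I_5 - H)^{-1} V_0\bigr]_{a+1} + 6\, \sigma_2^2\, \bigl[(I_5 - H)^{-1}\, G\, (I_5 - G)^{-1}\, V_0\bigr]_{a+1},
$$
which is finite for every $a \in \{0,\ldots,4\}$ under (H$_1$)--(H$_4$).

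The main obstacle is the pairing combinatorics in the quadruple sum: the partitions of the four-tuple must be enumerated carefully so that the correct multiplicity $6$ attaches to the off-diagonal pairs while the diagonal $k = \ell$ (which already lives in the $\sigma_4$ term) is not double-counted. The rest is bookkeeping once Lemmas \ref{LemSeq4} and \ref{LemSeq22} are in hand; the interchange of expectation and sum is justified term-by-term by the spectral-radius bounds after the odd-moment contributions have been eliminated, with $\tau_8 < \infty$ and $\sigma_4 < \infty$ ensuring that every individual expectation above is well defined.
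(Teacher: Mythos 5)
Your proof is correct and follows essentially the same route as the paper's: the same quadruple-sum expansion of the causal representation, the same reduction under (H$_2$) to the $\sigma_4$ and $6\,\sigma_2^2$ terms, and the same identification with the iterates $H^k V_0$ and $H^k G^{\ell-k} V_0$ from Lemmas \ref{LemSeq4} and \ref{LemSeq22}, closed by $\rho(H)<1$ and $\rho(G)=\rho(M)<1$. You merely make explicit two points the paper leaves implicit — the block-triangular argument for $\rho(G)$ and the closed form of the sum, which indeed agrees with \eqref{DelExpl}.
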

\begin{proof}
For all $t \in \dZ$ and $k \geq 1$, denote by
\begin{equation}
\label{Mom4}
\Delta_{t} = \begin{pmatrix}
1 \\ \eta_{t} \\ \vdots \\ \eta_{t}^{4}
\end{pmatrix} \hspand P_{t,\,k} = \prod_{i=0}^{k-1} \theta_{t-i}
\end{equation}
with $P_{t,\,0} = 1$. Since $(\veps_{t})$ and $(\eta_{t})$ are uncorrelated white noises, using the causal representation \eqref{XCausal} and the same notations as above,
\begin{eqnarray*}
\dE[\Delta_{t}\, X_{t}^4] & = & \sum_{k=0}^{\infty} \sum_{\ell=0}^{\infty} \sum_{u=0}^{\infty} \sum_{v=0}^{\infty} \dE[\Delta_{t}\, P_{t,\,k}\, P_{t,\,\ell}\, P_{t,\,u}\, P_{t,\,v}\, \veps_{t-k}\, \veps_{t-\ell}\, \veps_{t-u}\, \veps_{t-v}] \\
 & = & \sigma_4\, \sum_{k=0}^{\infty} \dE[\Delta_{t}\, P_{t,\,k}^4] + 6\, \sigma_2^2 \sum_{k=0}^{\infty} \sum_{\ell=k+1}^{\infty} \dE[\Delta_{t}\, P_{t,\,k}^2\, P_{t,\,\ell}^2] \\
 & = & \sigma_4\, \sum_{k=0}^{\infty} V_{k} + 6\, \sigma_2^2 \sum_{\ell=1}^{\infty} U_{\ell} + 6\, \sigma_2^2 \sum_{k=1}^{\infty} \sum_{\ell=k+1}^{\infty} W_{\ell,k}.
\end{eqnarray*}
Then, Lemmas \ref{LemSeq4} and \ref{LemSeq22} together with the strict stationarity of $(\theta_{t})$ enable to conclude the proof under the assumptions made, since $\rho(G) = \rho(M) < 1$.
\end{proof}

We now return to the proof of Theorem \ref{ThmOLS} and we make the following notations,
\begin{eqnarray*}
M_{n}^{(1)} & = & \sum_{t=1}^{n} X_{t-1}\, \big( (1 - 2\, \alpha\, \tau_2)\, \veps_{t} + 2\, \alpha\, \theta\, \eta_{t}\, \veps_{t} + 2\, \alpha\, \eta_{t}^2\, \veps_{t} \big), \\
M_{n}^{(2)} & = & \sum_{t=1}^{n} X_{t-1}^{\, 2}\, \big( (1 - 2\, \alpha\, \tau_2 + \alpha\, \theta^2)\, \eta_{t} + \alpha\, \eta_{t}^3 + 2\, \alpha\, \theta\, (\eta_{t}^2 - \tau_2) \big), \\
M_{n}^{(3)} & = & 2\, \alpha^2 \sum_{t=1}^{n} \eta_{t-1}\, X_{t-1}\, \eta_{t}\, \veps_{t}, \\
M_{n}^{(4)} & = & \sum_{t=1}^{n} \eta_{t-1}\, X_{t-1}^{\, 2}\, \big( 2\, \alpha^2\, \theta\, \eta_{t} + 2\, \alpha^2\, (\eta_{t}^2 - \tau_2) \big), \\
M_{n}^{(5)} & = & \alpha^3 \sum_{t=1}^{n} \eta_{t-1}^2\, X_{t-1}^{\, 2}\, \eta_{t}, \\
M_{n}^{(6)} & = & \alpha \sum_{t=1}^{n} \eta_{t}\, \veps_{t}^2.
\end{eqnarray*}
Consider the filtration $(\cF^{\, *}_{n})$ given in \eqref{FiltrInf}, and let
\begin{equation}
\label{VecMart}
\cM_{n} = \begin{pmatrix}
M_{n}^{(1)} \\
\vdots \\
M_{n}^{(6)}
\end{pmatrix}.
\end{equation}
Under our hypotheses, $\cM_{n}$ is a locally square-integrable real vector $(\cF^{\, *}_{n})$--martingale. Once again we will make use of the central limit theorem for vector martingales, as in the proof of Theorem \eqref{ThmEmpMean}. On the one hand, we have to study the asymptotic behavior of the predictable
quadratic variation of $\cM_{n}$. For all $n \geq 1$, let
\begin{equation}
\label{MartQuad}
\langle \cM \rangle_{n} = \sum_{t=1}^{n} \dE\big[ (\Delta \cM_{t} )(\Delta \cM_{t} )^{T}\, \vert\, \cF^{\, *}_{t-1} \big],
\end{equation}
with $\Delta \cM_1 = \cM_1$. To simplify the calculations, we introduce some more notations. The second-order moments of the process are defined in \eqref{Lam} and its fourth-order moments are called
\begin{equation}
\label{Del}
\dE[\Delta_{t}\, X_{t}^4] = \begin{pmatrix}
\delta_{0} \\
\vdots \\
\delta_{4}
\end{pmatrix} = \Delta
\end{equation}
where $\Delta_{t}$ is given in \eqref{Mom4}. We use the strict stationarity to establish, following Corollaries \ref{CorMom2} and \ref{CorMom4}, that
\begin{equation}
\label{DelExpl}
\Delta = (I_5-H)^{-1}\, (\sigma_2\, R + \sigma_4\, V_0)
\end{equation}
in which $R$ is defined from \eqref{G} as
$$
R = 6\, \lambda_0\, G_1 + 6\, \lambda_1\, G_2 + 6\, \lambda_2\, G_3.
$$
Now, we are going to show that the asymptotic behavior of $\langle \cM \rangle_{n}/n$ is entirely described by $\Lambda$ and $\Delta$. By ergodicity,
\begin{equation}
\label{CvgEmpMom4}
\frac{1}{n} \sum_{t=1}^{n} \Delta_{t}\, X_{t}^4 \cvgas \Delta.
\end{equation}
We get back to \eqref{MartQuad}. First, there exists constants such that
\begin{eqnarray*}
\langle M^{(1)}, M^{(2)} \rangle_{n} & = & \sum_{t=1}^{n} X_{t-1}^3\, \dE\big[ \big( k_{(1)} + k_{(2)}\, \eta_{t} + k_{(3)}\, \eta_{t}^2 \big) \big( k_{(4)}\, \eta_{t} + k_{(5)}\, \eta_{t}^3 \\
 & & \hsp \hsp \hsp + ~  k_{(6)}\, (\eta_{t}^2 - \tau_2) \big) \veps_{t} \big] ~ = ~ 0 \end{eqnarray*}
under our assumptions. \textit{Via} analogous arguments, it follows that
\begin{eqnarray*}
\langle M^{(1)}, M^{(4)} \rangle_{n} & = & \langle M^{(1)}, M^{(5)} \rangle_{n} ~ = ~ \langle M^{(1)}, M^{(6)} \rangle_{n} ~ = ~ \langle M^{(2)}, M^{(3)} \rangle_{n} \\
 & = & \langle M^{(3)}, M^{(4)} \rangle_{n} ~ = ~ \langle M^{(3)}, M^{(5)} \rangle_{n} ~ = ~ \langle M^{(3)}, M^{(6)} \rangle_{n} ~ = ~ 0.
\end{eqnarray*}
Then we look at nonzero contributions, where we use the constants defined in \eqref{CstMartQuad} and \eqref{K}. Since the calculations are very similar we only detail the first one,
\begin{eqnarray*}
\langle M^{(1)} \rangle_{n} & = & \sum_{t=1}^{n} X_{t-1}^{\, 2}\, \dE\big[ \big( (1 - 2\, \alpha\, \tau_2)\, \veps_{t} + 2\, \alpha\, \theta\, \eta_{t}\, \veps_{t} + 2\, \alpha\, \eta_{t}^2\, \veps_{t} \big)^2 \big] \\
 & = &  \sigma_2\, \big( 1 + 4\, \alpha^2\, ( \theta^2\, \tau_2 - \tau_2^2 + \tau_4) \big) \sum_{t=1}^{n} X_{t-1}^{\, 2}.
\end{eqnarray*}
To sum up, we obtain
\begin{equation}
\label{MartQuadExpl}
\langle \cM \rangle_{n} = K \circ \sum_{t=1}^{n} \begin{pmatrix}
X_{t}^2 & 0 & \eta_{t}\, X_{t}^2 & 0 & 0 & 0 \\
0 & X_{t}^4 & 0 & \eta_{t}\, X_{t}^4 & \eta_{t}^2\, X_{t}^4 & X_{t}^2 \\
\eta_{t}\, X_{t}^2 & 0 & \eta_{t}^2\, X_{t}^2 & 0 & 0 & 0 \\
0 & \eta_{t}\, X_{t}^4 & 0 & \eta_{t}^2\, X_{t}^4 & \eta_{t}^3\, X_{t}^4 & \eta_{t}\, X_{t}^2 \\
0 & \eta_{t}^2\, X_{t}^4 & 0 & \eta_{t}^3\, X_{t}^4 & \eta_{t}^4\, X_{t}^4 & \eta_{t}^2\, X_{t}^2 \\
0 & X_{t}^2 & 0 & \eta_{t}\, X_{t}^2 & \eta_{t}^2\, X_{t}^2 & 1
\end{pmatrix} + R_{n}
\end{equation}
where the Hadamard product $\circ$ is used to lighten the formula, and where the remainder $R_{n}$ is made of isolated terms such that
\begin{equation}
\label{MartQuadCvgRem}
\frac{R_{n}}{n} \cvgas 0.
\end{equation}
To reach these results, we refer the reader to the explanations following \eqref{MartQuadCvgRemEmpMean} since the same methodology has just been applied on $\cM_{n}$. The combination of \eqref{CvgEmpMom2}, \eqref{CvgEmpMom4}, \eqref{MartQuadExpl} and \eqref{MartQuadCvgRem} leads to
\begin{equation}
\label{MartQuadCvg}
\frac{\langle \cM \rangle_{n}}{n} \cvgas K \circ \Gamma
\end{equation}
where $\Gamma$ is given by
\begin{equation}
\label{Gam}
\Gamma = \begin{pmatrix}
\lambda_0 & 0 & \lambda_1 & 0 & 0 & 0 \\
0 & \delta_0 & 0 & \delta_1 & \delta_2 & \lambda_0 \\
\lambda_1 & 0 & \lambda_2 & 0 & 0 & 0 \\
0 & \delta_1 & 0 & \delta_2 & \delta_3 & \lambda_1 \\
0 & \delta_2 & 0 & \delta_3 & \delta_4 & \lambda_2 \\
0 & \lambda_0 & 0 & \lambda_1 & \lambda_2 & 1
\end{pmatrix}.
\end{equation}
On the other hand, it is necessary to prove that the Lindeberg's condition is satisfied, namely that for all $\veps > 0$,
\begin{equation}
\label{MartLind}
\frac{1}{n} \sum_{t=1}^{n} \dE\big[ \Vert \Delta \cM_{t} \Vert^2\, \dI_{ \{ \Vert \Delta \cM_{t} \Vert\, \geq\, \veps\, \sqrt{n} \}}\, \vert\, \cF^{\, *}_{t-1} \big] \cvgp 0
\end{equation}
as $n$ tends to infinity. The result follows from Corollaries \ref{CorMom2} and \ref{CorMom4}, together with the same reasoning as the one used to establih \eqref{MartLindEmpMean}. From \eqref{MartQuadCvg} and \eqref{MartLind}, we deduce that
\begin{equation}
\label{MartCvg}
\frac{\cM_{n}}{\sqrt{n}} \cvgl \cN(0, K \circ \Gamma).
\end{equation}
Finally, by a very tedious but straightforward calculation, one can obtain that
\begin{equation}
\label{DecompMart}
\sqrt{n}\, \big( \wh{\theta}_{n} - \theta^{*} \big) = \frac{n}{\sum_{t=1}^{n} X_{t-1}^{\, 2}}\, \frac{\Omega_{6}^{\, T} \cM_{n} + r_{n}}{(1 - 2\, \alpha\, \tau_2)\, \sqrt{n}}
\end{equation}
where $\Omega_{6}^{\, T} = (1 ~ 1 ~ 1 ~ 1 ~ 1 ~ 1)$ and $r_{n} = o(\sqrt{n})$ a.s. from \eqref{CvgEmpMom2} and \eqref{CvgEmpMom4}. It remains to apply Slutsky's lemma to conclude that
$$
\sqrt{n}\, \big( \wh{\theta}_{n} - \theta^{*} \big) \cvgl \cN(0, \omega^2)
$$
with
\begin{equation}
\label{VarOmega}
\omega^2 = \frac{\Omega_{6}^{\, T} (K \circ \Gamma)\, \Omega_{6}}{ \lambda_0^2\, (1 - 2\, \alpha\, \tau_2)^2}
\end{equation}
using the whole notations above. $\hfill\qed$

\subsection{Proof of Theorem \ref{ThmOLSRat}}
\label{SecProofThmOLSRat}

Letting $V_{n} = \sqrt{n}\, I_6$, such a sequence obviously satisfies the regular growth conditions of \cite{ChaabaneMaaouia00}. Keeping the notations of \eqref{VecMart}, we have studied the hook of $\cM_{n}$ in \eqref{MartQuadCvg} and Lindeberg's condition is already fulfilled in \eqref{MartLind}, it only remains to check that
\begin{equation}
\label{MartDiffVar}
\frac{[\cM]_{n} - \langle \cM \rangle_{n}}{n} \cvgas 0
\end{equation}
where
\begin{equation*}
[\cM]_{n} = \sum_{t=1}^{n} (\Delta \cM_{t}) (\Delta \cM_{t})^{\, T}
\end{equation*}
is the total variation of $\cM_{n}$, to apply Thm. 2.1 of \cite{ChaabaneMaaouia00}. To be precise with the required hypotheses, note that \eqref{MartLind} also holds almost surely, by ergodicity. But \eqref{MartDiffVar} is an immediate consequence of the ergodicity of the increments. Thus,
\begin{equation*}
\frac{1}{6\, \ln n} \sum_{t=1}^{n} \left[ 1 - \left( \frac{t}{t+1} \right)^{\! 6} \right] \frac{\cM_{t}\, \cM_{t}^{\, T}}{t} \cvgas K \circ \Gamma
\end{equation*}
and, after simplifications,
\begin{equation}
\label{LfqMart}
\frac{1}{\ln n} \sum_{t=1}^{n} \frac{\cM_{t}\, \cM_{t}^{\, T}}{t^{\, 2}} \cvgas K \circ \Gamma.
\end{equation}
The remainder $r_{n}$ in \eqref{DecompMart} is a long linear combination of isolated terms, we detail here the treatment of the largest one which takes the form of $\eta_{n-1}^2\, X_{n-1}^2\, \eta_{n}$. Corollary \ref{CorMom4} implies, for $a=4$ and \textit{via} the ergodic theorem, that
\begin{equation*}
\frac{1}{n} \sum_{t=1}^{n} \eta_{t-1}^4\, X_{t-1}^4\, \eta_{t}^2 \cvgas \delta_4\, \tau_2,
\end{equation*}
which in turn leads to
\begin{equation*}
\frac{\eta_{n-1}^4\, X_{n-1}^4\, \eta_{n}^2}{n} \cvgas 0 \hsp \text{so that} \hsp \frac{\eta_{n-1}^4\, X_{n-1}^4\, \eta_{n}^2}{n^2} = o(n^{-1}) \hspp \textnormal{a.s.}
\end{equation*}
It follows that
\begin{equation*}
\sum_{t=1}^{n} \frac{\eta_{t-1}^4\, X_{t-1}^4\, \eta_{t}^2}{t^2} = o\bigg( \sum_{t=1}^{n} \frac{1}{t} \bigg) = o(\ln n) \hspp \textnormal{a.s.}
\end{equation*}
By extrapolation, treating similarly all residual terms,
\begin{equation}
\label{LfqRn}
\frac{1}{\ln n} \sum_{t=1}^{n} \frac{r_{t}^{\, 2}}{t^{\, 2}} \cvgas 0.
\end{equation}
It remains to combine these results to get
\begin{eqnarray*}
\frac{(1 - 2\, \alpha\, \tau_2)^2}{\ln n} \sum_{t=1}^{n} \big( \wh{\theta}_{t} - \theta^{*} \big)^2 & = & \frac{1}{\ln n} \sum_{t=1}^{n} \frac{\Omega_{6}^{\, T} \cM_{t}\, \cM_{t}^{\, T} \Omega_{6}}{S_{t-1}^{\, 2}} + \frac{1}{\ln n} \sum_{t=1}^{n} \frac{r_{t}^{\, 2}}{S_{t-1}^{\, 2}} \\
 & & \hsp \hsp \hsp + ~ \frac{2}{\ln n} \sum_{t=1}^{n}  \frac{\Omega_{6}^{\, T} \cM_{t}\, r_{t}}{S_{t-1}^{\, 2}}
\end{eqnarray*}
where
\begin{equation}
\label{Sn}
S_{n} = \sum_{t=0}^{n} X_{t}^{\, 2} \hsp \text{satisfies} \hsp \frac{S_{n}}{n} \cvgas \lambda_0.
\end{equation}
Using Cauchy-Schwarz inequality, the cross-term is shown to be negligible. From \eqref{VarOmega}, \eqref{LfqMart}, \eqref{LfqRn} and the previous remark,
\begin{equation*}
\frac{1}{\ln n} \sum_{t=1}^{n} \big( \wh{\theta}_{t} - \theta^{*} \big)^2 \cvgas \frac{\Omega_{6}^{\, T} (K \circ \Gamma)\, \Omega_{6}}{\lambda_0^2\, (1 - 2\, \alpha\, \tau_2)^2} = \omega^2
\end{equation*}
which concludes the first part of the proof and follows from Toeplitz lemma applied in the right-hand side of the decomposition. The rate of convergence of $\wh{\theta}_{n}$ is easier to handle. As a matter of fact, we have already seen that $\cM_{n}$ is a vector $(\cF^{\, *}_{n})$--martingale having ergodic and stationary increments. So,
\begin{equation}
\label{MartCL}
\cN_{n} = \Omega_{6}^{\, T} \cM_{n}
\end{equation}
is a scalar $(\cF^{\, *}_{n})$--martingale having the same incremental properties, and our hypotheses guarantee that $\dE[(\Delta\, \cN_1)^2] = \Omega_{6}^{\, T} (K \circ \Gamma)\, \Omega_{6} < \infty$. The main theorem of \cite{Stout70} enables to infer that
\begin{equation}
\label{LliMartSup}
\limsup_{n\, \rightarrow\, +\infty}~ \frac{\cN_{n}}{\sqrt{2\, n\, \ln \ln n}} = \sqrt{\Omega_{6}^{\, T} (K \circ \Gamma)\, \Omega_{6}} \hspp \textnormal{a.s.}
\end{equation}
and
\begin{equation}
\label{LliMartInf}
\liminf_{n\, \rightarrow\, +\infty}~ \frac{\cN_{n}}{\sqrt{2\, n\, \ln \ln n}} = -\sqrt{\Omega_{6}^{\, T} (K \circ \Gamma)\, \Omega_{6}} \hspp \textnormal{a.s.}
\end{equation}
replacing $\cN_{n}$ by $-\cN_{n}$. Thus, once again exploiting \eqref{DecompMart},
\begin{eqnarray*}
\limsup_{n\, \rightarrow\, +\infty}~ \sqrt{\frac{n}{2\, \ln \ln n}}\, \big( \wh{\theta}_{n} - \theta^{*} \big) & = & \frac{1}{\lambda_0\, (1 - 2\, \alpha\, \tau_2)}~ \limsup_{n\, \rightarrow\, +\infty}~ \frac{\cN_{n} + r_{n}}{\sqrt{2\, n\, \ln \ln n}} \\
 & = & \omega \hspp \textnormal{a.s.}
\end{eqnarray*}
using \eqref{LliMartSup} and the fact that $r_{n} = o(\sqrt{n})$ a.s. The symmetric result is reached from \eqref{LliMartInf} and the proof is complete. $\hfill\qed$

\subsection{Proof of Theorem \ref{ThmTlcNewEst}}
\label{SecProofThmTlcNewEst}

One shall prove this result in two steps. First, we will identify the covariance $\Sigma$ such that
\begin{equation}
\label{NormCouple}
\sqrt{n}\, \begin{pmatrix}
\wh{\theta}_{n} - \theta^{*} \\
\wh{\vartheta}_{n} - \vartheta^{*}
\end{pmatrix} \cvgl \cN(0, \Sigma)
\end{equation}
where $\wh{\theta}_{n}$ and $\wh{\vartheta}_{n}$ are given in \eqref{OLSEst2}, $\theta^{*} = \rho_{X}(1)$ is the limiting value of $\wh{\theta}_{n}$ deeply investigated up to this point and
\begin{equation*}
\vartheta^{*} = \rho_{X}(2) = \frac{\theta^2 + \alpha\, \tau_2\, (1 - 2\, \alpha\, \tau_2)}{1 - 2\, \alpha\, \tau_2}.
\end{equation*}
Then we will translate the result to the new estimates \eqref{NewEst} \textit{via} the Delta method. Of course the first step being very close to the proof of Theorem \ref{ThmOLS}, we only give an outline of the calculations. The second-order lag in $\wh{\vartheta}_{n}$ gives a new scalar $(\cF^{\, *}_{n})$--martingale contribution that we will define as
\begin{eqnarray}
\label{NewMart}
\cL_{n} & = & \alpha \sum_{t=1}^{n} X_{t-1}\, \eta_{t}\, \veps_{t} + \sum_{t=1}^{n} X_{t-1}^{\, 2}\, \big( \alpha\, \theta\, \eta_{t} + \alpha\, (\eta_{t}^2 - \tau_2) \big) \nonumber \\
 & & \hsp \hsp \hsp + ~ \alpha^2 \sum_{t=1}^{n} \eta_{t-1}\, X_{t-1}^{\, 2}\, \eta_{t} + \sum_{t=2}^{n} X_{t-2}\, \veps_{t} + \sum_{t=2}^{n} X_{t-2}\, \veps_{t-1}\, \eta_{t} \nonumber \\
  & & \hsp \hsp \hsp + ~ \theta \sum_{t=2}^{n} X_{t-2}^{\, 2}\, \eta_{t} + \sum_{t=2}^{n} X_{t-2}^{\, 2}\, \eta_{t-1}\, \eta_{t} + \alpha \sum_{t=2}^{n} \eta_{t-2}\, X_{t-2}^{\, 2}\, \eta_{t}
\end{eqnarray}
which follows from a very tedious development of $\sum_{t=2}^{n} X_{t-2} X_{t}.$ An exhaustive expansion of $\wh{\vartheta}_{n} - \vartheta^{*}$ leads to
\begin{equation*}
\big( \wh{\vartheta}_{n} - \vartheta^{*} \big) S_{n-2} = \theta^{*}\, \Omega_{6}^{\, T} \cM_{n} + \cL_{n} + s_{n}
\end{equation*}
where $\cM_{n}$ is given in \eqref{VecMart}, $S_{n}$ in \eqref{Sn}, $\Omega_{6}^{\, T} = (1 ~ 1 ~ 1 ~ 1 ~ 1 ~ 1)$ and $s_{n}$ is made of isolated terms, each one being $o(\sqrt{n})$ a.s. as soon as the process has fourth-order moments, \textit{i.e.} under (H$_4$). Combined with \eqref{DecompMart},
\begin{equation}
\label{DecompNewMart}
\sqrt{n}\, \begin{pmatrix}
\wh{\theta}_{n} - \theta^{*} \\
\wh{\vartheta}_{n} - \vartheta^{*}
\end{pmatrix} = \frac{A_{n}}{\sqrt{n}}\, \begin{pmatrix}
\cM_{n} \\ \cL_{n}
\end{pmatrix} + T_{n}
\end{equation}
where
\begin{equation}
\label{An}
A_{n} = \begin{pmatrix}
\frac{n}{S_{n-1}}\, \frac{\Omega_{6}^{\, T}}{1 - 2\, \alpha\, \tau_2} & 0 \\
\frac{n}{S_{n-2}}\, \frac{\theta\, \Omega_{6}^{\, T}}{1 - 2\, \alpha\, \tau_2} & \frac{n}{S_{n-2}}
\end{pmatrix} \cvgas A = \begin{pmatrix}
\frac{\Omega_{6}^{\, T}}{\lambda_0\, (1 - 2\, \alpha\, \tau_2)} & 0 \\
\frac{\theta\, \Omega_{6}^{\, T}}{\lambda_0\, (1 - 2\, \alpha\, \tau_2)} & \frac{1}{\lambda_0}
\end{pmatrix}
\end{equation}
are matrices of size $2 \times 7$ and $T_{n} = o(1)$ a.s. We have to study the hook of this new vector $(\cF^{\, *}_{n})$--martingale. First, $\langle \cM \rangle_{n}$ is already treated in \eqref{MartQuadCvg}. For the cross-term and the last one, we need more notations. Let
\begin{equation}
\label{Mu}
\mu_{a,b,c,p,q} = \dE[\eta_{t-1}^{\, a}\, \eta_{t}^{\, b}\, \veps_{t}^{\, c}\, X_{t-1}^{\, p}\, X_{t}^{\, q}]
\end{equation}
and observe that $\mu_{0,b,0,0,2} = [\Lambda]_{b+1}$ in \eqref{Lam} for $b \in \{ 0,1,2 \}$ and that $\mu_{0,b,0,0,4} = [\Delta]_{b+1}$ in \eqref{Del} for $b \in \{ 0, \hdots, 4 \}$. Then, it can be seen \textit{via} analogous arguments as usual relying on ergodicity and negligible isolated terms, that
\begin{equation}
\label{NewMartQuadMLCvg}
\frac{\langle \cM, \cL \rangle_{n}}{n} \cvgas (L \circ \Upsilon)\, \Omega_{6}
\end{equation}
where $L$ is defined in \eqref{L} and $\Upsilon$ is given by
\begin{equation}
\label{Ups}
\Upsilon = \begin{pmatrix}
\theta^{*}\, \lambda_0 & \lambda_0 & 0 & 0 & 0 & 0 \\
\delta_0 & \delta_1 & \mu_{0,0,0,2,2} & \mu_{0,1,0,2,2} & \mu_{1,0,0,2,2} & \mu_{0,0,1,1,2} \\
\lambda_1 & 0 & 0 & 0 & 0 & 0 \\
\delta_1 & \delta_2 & \mu_{0,1,0,2,2} & \mu_{0,2,0,2,2} & \mu_{1,1,0,2,2} & \mu_{0,1,1,1,2} \\
\delta_2 & \delta_3 & \mu_{0,2,0,2,2} & \mu_{0,3,0,2,2} & \mu_{1,2,0,2,2} & \mu_{0,2,1,1,2} \\
\lambda_0 & \lambda_1 & 0 & 0 & 0 & 0
\end{pmatrix}.
\end{equation}
Finally, we have
\begin{eqnarray}
\label{NewMartQuadLCvg}
\frac{\langle \cL \rangle_{n}}{n} & \cvgas & \ell ~ = ~ m_{(1)}\, \lambda_0 + m_{(2)}\, \delta_0 + m_{(3)}\, \delta_1 + m_{(4)}\, \delta_2 + \theta\, m_{(5)}\, \mu_{0,0,0,2,2} \nonumber \\
 & & \hsp \hsp \hsp + ~ \alpha\, m_{(5)}\, \mu_{1,0,0,2,2} + (1 + \alpha)\, m_{(5)}\, \mu_{0,1,0,2,2} + m_{(5)}\, \mu_{0,0,1,1,2} \nonumber \\
 & & \hsp \hsp \hsp + ~ m_{(6)}\, \mu_{0,2,0,2,2} + \alpha\, m_{(6)}\, \mu_{1,1,0,2,2} + m_{(6)}\, \mu_{0,1,1,1,2}
\end{eqnarray}
where the constants are detailed in \eqref{CstNewMartQuadL}. This last convergence, together with \eqref{NewMartQuadMLCvg}, \eqref{MartQuadCvg} and their related notations, implies
\begin{equation}
\label{NewMartQuadCvg}
\frac{1}{n} \left\langle \begin{matrix} \begin{pmatrix} \cM \\ \cL \end{pmatrix} \end{matrix} \right\rangle_{\! n} \cvgas \Sigma_{\textnormal{ML}} = \begin{pmatrix}
K \circ \Gamma & (L \circ \Upsilon)\, \Omega_{6} \\
\Omega_{6}^{\, T} (L \circ \Upsilon)^{\, T}\, & \ell
\end{pmatrix}. 
\end{equation}
Lindeberg's condition is clearly fulfilled and Slutsky's lemma applied on the relation \eqref{DecompNewMart}, taking into account the asymptotic normality of the martingale and the remarks that follow \eqref{DecompNewMart}, enables to identify $\Sigma$ in \eqref{NormCouple} as
\begin{equation}
\label{Sig}
\Sigma = A\, \Sigma_{\textnormal{ML}} A^{\, T}
\end{equation}
where $A$ is given in \eqref{An}. This ends the first part of the proof.

\begin{rem}
\label{RemSig}
It is important to note that, despite the complex structure of $\Sigma$, it only depends on the parameters and can be computed explicitely. Indeed, it is easy to see that all coefficients $\mu_{a,b,c,p,q}$ in $\Sigma_{\textnormal{ML}}$ exist under our hypotheses, exploiting the fourth-order moments of the process. We can compute each of them using the same lines as in our previous technical lemmas.
\end{rem}

\noindent Consider now the mapping $f$ in \eqref{MapDelta} whose Jacobian matrix is
\begin{equation*}
\nabla f(x,y) = \begin{pmatrix}
\frac{(1 - 2 y)\, (1 + 2 x^2)}{(1 - 2 x^2)^2} & \frac{-2 x}{1 - 2 x^2} \\
\frac{-2 x\, (1 - 2 y)}{(1 - 2 x^2)^2} & \frac{1}{1 - 2 x^2}
\end{pmatrix}.
\end{equation*}
The couple of estimates \eqref{NewEst} therefore satisfies
\begin{equation*}
\sqrt{n}\, \begin{pmatrix}
\wt{\theta}_{n} - \theta \\
\wt{\gamma}_{n} - \gamma
\end{pmatrix} \cvgl \cN(0, \nabla^{\, T} f(\theta^{*}, \vartheta^{*})\, \Sigma\, \nabla f(\theta^{*}, \vartheta^{*}))
\end{equation*}
by application of the Delta method, the pathological cases $\theta^{*} = \pm\frac{1}{\sqrt{2}}$ being excluded from the study. $\hfill\qed$

%%%%%%%%%%%%%%%%%%%%%%%%%%%%%%%%%%%%%%%%%%%%%%%%%%%%%%%%%%%%%%%%%%%%%%%%%%%%%%%%

%%%%%%%%%%%%%%%%%%%%%%%%%%%%%%%%%%%%%%%%%%%%%%%%%%%%%%%%%%%%%%%%%%%%%%%%%%%%%%%%
\section*{Appendix}
\label{SecApp}

\setcounter{equation}{0}
\renewcommand{\theequation}{A.\arabic{equation}}

This appendix is devoted to the numerous constants of the study, for greater clarity. The first of them are given by
\begin{equation}
\label{CstMartEmpMean}
\left\{
\begin{array}{lcl}
\bar{k}_{(1)} & = & (1 + \alpha\, \theta)^2\, \tau_2 + \alpha^2\, (\tau_4 - \tau_2^2) \\
\bar{k}_{(1-2)} & = & \alpha^2\, (1 + \alpha\, \theta)\, \tau_2 \\
\bar{k}_{(2)} & = & \alpha^4\, \tau_2 \\
\bar{k}_{(3)} & = & (1 + \alpha^2\, \tau_2)\, \sigma_2 \\
\end{array}
\right.
\end{equation}
and serve to build the matrix
\begin{equation}
\label{Kb}
\bar{K} = \begin{pmatrix}
\bar{k}_{(1)} & \bar{k}_{(1-2)} & 0 \\
\bar{k}_{(1-2)} & \bar{k}_{(2)} & 0 \\
0 & 0 & \bar{k}_{(3)}
\end{pmatrix}.
\end{equation}
We also define
\begin{equation}
\label{CstMartQuad}
\left\{
\begin{array}{lcl}
k_{(1)} & = & \sigma_2\, ( 1 + 4\, \alpha^2\, ( \theta^2\, \tau_2 - \tau_2^2 + \tau_4) ) \\
k_{(1-3)} & = & 4\, \alpha^3\, \theta\, \tau_2\, \sigma_2 \\
k_{(2)} & = & (1 - 2\, \alpha\, \tau_2 + \alpha\, \theta^2)\, ( 2\, \alpha\, \tau_4 + \tau_2\, (1 - 2\, \alpha\, \tau_2 + \alpha\, \theta^2)) \\
 & & \hsp \hsp \hsp + ~ \alpha^2\, ( \tau_6 + 4\, \theta^2\, (\tau_4 - \tau_2^2) ) \\
k_{(2-4)} & = & 2\, \alpha^2\, \theta\, \tau_2\, (1 + \alpha\, \theta^2 - 4\, \alpha\, \tau_2) + 6\, \alpha^3\, \theta\, \tau_4 \\
k_{(2-5)} & = & \alpha^3\, (\alpha\, \tau_4 + \tau_2\, (1 - 2\, \alpha\, \tau_2 + \alpha\, \theta^2)) \\
k_{(2-6)} & = & \alpha\, \sigma_2\, ( \alpha\, \tau_4 + \tau_2\, (1 - 2\, \alpha\, \tau_2 + \alpha\, \theta^2) ) \\
k_{(3)} & = & 4\, \alpha^4\, \tau_2\, \sigma_2 \\
k_{(4)} & = & 4\, \alpha^4\, ( \theta^2\, \tau_2 - \tau_2^2 + \tau_4) \\
k_{(4-5)} & = & 2\, \alpha^5\, \theta\, \tau_2 \\
k_{(4-6)} & = & 2\, \alpha^3\, \theta\, \tau_2\, \sigma_2 \\
k_{(5)} & = & \alpha^6\, \tau_2 \\
k_{(5-6)} & = & \alpha^4\, \tau_2\, \sigma_2 \\
k_{(6)} & = & \alpha^2\, \tau_2\, \sigma_4
\end{array}
\right.
\end{equation}
that we put in the matrix form
\begin{equation}
\label{K}
K = \begin{pmatrix}
k_{(1)} & 0 & k_{(1-3)} & 0 & 0 & 0 \\
0 & k_{(2)} & 0 & k_{(2-4)} & k_{(2-5)} & k_{(2-6)} \\
k_{(1-3)} & 0 & k_{(3)} & 0 & 0 & 0 \\
0 & k_{(2-4)} & 0 & k_{(4)} & k_{(4-5)} & k_{(4-6)} \\
0 & k_{(2-5)} & 0 & k_{(4-5)} & k_{(5)} & k_{(5-6)} \\
0 & k_{(2-6)} & 0 & k_{(4-6)} & k_{(5-6)} & k_{(6)}
\end{pmatrix}.
\end{equation}
Moreover, we have to consider
\begin{equation}
\label{CstNewMartQuadML}
\left\{
\begin{array}{lcl}
\ell_{(1)}^{\, \prime} & = & \sigma_2 \\
\ell_{(1)} & = & 2\, \alpha^2\, \theta\, \tau_2\, \sigma_2 \\
\ell_{(2)}^{\, \prime} & = & \alpha\, \theta\, (\tau_2\, (1 - 2\, \alpha\, \tau_2 + \alpha\, \theta^2) - \alpha\, (2\, \tau_2^2 - 3\, \tau_4)) \\
\ell_{(2)} & = & \alpha\, \tau_4 + \tau_2\, (1 - 2\, \alpha\, \tau_2 + \alpha\, \theta^2) \\
\ell_{(3)} & = & 2\, \alpha^3\, \tau_2\, \sigma_2 \\
\ell_{(4)}^{\, \prime} & = & 2\, \alpha^3\, ( \theta^2\, \tau_2 - \tau_2^2 + \tau_4) \\
\ell_{(4)} & = & 2\, \alpha^2\, \theta\, \tau_2 \\
\ell_{(5)} & = & \alpha^4\, \tau_2 \\
\ell_{(6)} & = & \alpha\, \tau_2\, \sigma_2\, (1+\alpha)
\end{array}
\right.
\end{equation}
in the matrix form
\begin{equation}
\label{L}
L = \begin{pmatrix}
\ell_{(1)}^{\, \prime} & \ell_{(1)} & 0 & 0 & 0 & 0 \\
\ell_{(2)}^{\, \prime} & \alpha^2\, \ell_{(2)} & \theta\, \ell_{(2)} & \ell_{(2)} & \alpha\, \ell_{(2)} & \ell_{(2)} \\
\ell_{(3)} & 0 & 0 & 0 & 0 & 0 \\
\ell_{(4)}^{\, \prime} & \alpha^2\, \ell_{(4)} & \theta\, \ell_{(4)} & \ell_{(4)} & \alpha\, \ell_{(4)} & \ell_{(4)} \\
\alpha\, \theta\, \ell_{(5)} & \alpha^2\, \ell_{(5)} & \theta\, \ell_{(5)} & \ell_{(5)} & \alpha\, \ell_{(5)} & \ell_{(5)} \\
\theta\, \ell_{(6)} & \alpha\, \ell_{(6)} & 0 & 0 & 0 & 0
\end{pmatrix}.
\end{equation}
We conclude by a last set of constants,
\begin{equation}
\label{CstNewMartQuadL}
\left\{
\begin{array}{lcl}
m_{(1)} & = & \sigma_2\, (1 + \tau_2\, (1 + \alpha^2)) \\
m_{(2)} & = & \, \theta^2\, (1 + \alpha^2)\, \tau_2 + (1 - \alpha^2)\, \tau_2^2 + \alpha^2\, \tau_4 \\
m_{(3)} & = & 2\, \alpha\, \theta\, (1 + \alpha^2)\, \tau_2 \\
m_{(4)} & = & \alpha^2\, (1 + \alpha^2)\, \tau_2 \\
m_{(5)} & = & 2\, \alpha\, \theta\, \tau_2 \\
m_{(6)} & = & 2\, \alpha^2\, \tau_2.
\end{array}
\right.
\end{equation}

%%%%%%%%%%%%%%%%%%%%%%%%%%%%%%%%%%%%%%%%%%%%%%%%%%%%%%%%%%%%%%%%%%%%%%%%%%%%%%%%
\nocite{*}

\bibliographystyle{plain}
\bibliography{RCARMACoef}

\begin{thebibliography}{10}

\bibitem{Andel76}
J.~And\v{e}l.
\newblock Autoregressive series with random parameters.
\newblock {\em Math. Operationsforsch. Statist.}, 7-5:735--741, 1976.

\bibitem{AueHorvath11}
A.~Aue and L.~Horv\'{a}th.
\newblock Quasi-likelihood estimation in stationary and nonstationary
  autoregressive models with random coefficients.
\newblock {\em Stat. Sinica.}, 21:973--999, 2011.

\bibitem{AueHorvathSteinebach06}
A.~Aue, L.~Horv\'{a}th, and J.~Steinebach.
\newblock Estimation in random coefficient autoregressive models.
\newblock {\em J. Time. Ser. Anal.}, 27-1:61--76, 2006.

\bibitem{BerkesHorvathLing09}
I.~Berkes, L.~Horv\'{a}th, and S.~Ling.
\newblock Estimation in nonstationary random coefficient autoregressive models.
\newblock {\em J. Time. Ser. Anal.}, 30-4:395--416, 2009.

\bibitem{Billingsley61}
P.~Billingsley.
\newblock The {L}indeberg-{L}\'evy theorem for martingales.
\newblock {\em Proc. Amer. Math. Soc.}, 12:788--792, 1961.

\bibitem{Brandt86}
A.~Brandt.
\newblock The stochastic equation ${Y}_{n+1} = {A}_{n} {Y}_{n} + {B}_{n}$ with
  stationary coefficients.
\newblock {\em Adv. Appl. Probab.}, 18:211--220, 1986.

\bibitem{BrockwellDavis96}
P.~J. Brockwell and R.~A. Davis.
\newblock {\em Time Series: Theory and Methods. Second Edition}.
\newblock Springer Series in Statistics. Springer-Verlag, New-York, 1991.

\bibitem{ChaabaneMaaouia00}
F.~Chaabane and F.~Maaouia.
\newblock Th\'eor\`emes limites avec poids pour les martingales vectorielles.
\newblock {\em ESAIM Probab. Stat.}, 4:137--189, 2000.

\bibitem{Duflo97}
M.~Duflo.
\newblock {\em Random iterative models}, volume~34 of {\em Applications of
  Mathematics, New York}.
\newblock Springer-Verlag, Berlin, 1997.

\bibitem{HornJohnson85}
R.~A. Horn and C.~R. Johnson.
\newblock {\em Matrix Analysis}.
\newblock Cambridge University Press, Cambridge, New-York, 1985.

\bibitem{HwangBasawa05}
S.~Y. Hwang and I.~V. Basawa.
\newblock Explosive random-coefficient ar(1) processes and related asymptotics
  for least-squares estimation.
\newblock {\em J. Time. Ser. Anal.}, 26-6:807--824, 2005.

\bibitem{HwangBasawaKim06}
S.~Y. Hwang, I.~V. Basawa, and T.~Y. Kim.
\newblock Least squares estimation for critical random coefficient first-order
  autoregressive processes.
\newblock {\em Stat. Probab. Lett.}, 76:310--317, 2006.

\bibitem{Jurgens85}
U.~J\"{u}rgens.
\newblock The estimation of a random coefficient {AR}(1) process under moment
  conditions.
\newblock {\em Statist. Hefte.}, 26:237--249, 1985.

\bibitem{Koubkova82}
A.~Koubkov\`a.
\newblock First-order autoregressive processes with time-dependent random
  parameters.
\newblock {\em Kybernetika.}, 18-5:408--414, 1982.

\bibitem{NichollsQuinn81b}
D.~F. Nicholls and B.~G. Quinn.
\newblock The estimation of multivariate random coefficient autoregressive
  models.
\newblock {\em J. Multivar. Anal.}, 11:544--555, 1981.

\bibitem{NichollsQuinn81a}
D.~F. Nicholls and B.~G. Quinn.
\newblock Multiple autoregressive models with random coefficients.
\newblock {\em J. Multivar. Anal.}, 11:185--198, 1981.

\bibitem{NichollsQuinn82}
D.~F. Nicholls and B.~G. Quinn.
\newblock {\em Random Coefficient Autoregressive Models: An Introduction},
  volume~11 of {\em Lecture Notes in Statistics}.
\newblock Springer-Verlag, New-York, 1982.

\bibitem{Robinson78}
P.~M. Robinson.
\newblock Statistical inference for a random coefficient autoregressive model.
\newblock {\em Scand. J. Stat.}, 5-3:163--168, 1978.

\bibitem{Schick96}
A.~Schick.
\newblock $\sqrt{n}$--consistent estimation in a random coefficient
  autoregressive model.
\newblock {\em Austral. J. Statist.}, 38-2:155--160, 1996.

\bibitem{Stout70}
W.~F. Stout.
\newblock The {H}artman-{W}intner law of the iterated logarithm for
  martingales.
\newblock {\em Ann. Math. Stat.}, 41-6:2158--2160, 1970.

\bibitem{Stout74}
W.~F. Stout.
\newblock {\em Almost sure convergence}, volume~24 of {\em Probability and
  Mathematical Statistics}.
\newblock Academic Press, New-York-London, 1974.

\bibitem{TaniguchiKakizawa00}
M.~Taniguchi and Y.~Kakizawa.
\newblock {\em Asymptotic Theory of Statistical Inference for Time Series}.
\newblock Springer Series in Statistics. Springer, New-York, 2000.

\end{thebibliography}

\vspace{10pt}

\end{document}